\crefname{equation}{Eq.}{Eqs.}
\Crefname{equation}{Equation}{Equations}
\crefname{figure}{Fig.}{Figs.}
\Crefname{figure}{Figure}{Figures}
\crefname{hypothesis}{Hypothesis}{Hypotheses}
\DeclareMathOperator*{\argmin}{arg\,min}
\DeclareMathOperator{\prox}{prox}
\newcommand{\qqbox}[1]{\qquad\hbox{#1}\qquad}
\title{Fast convex optimization via inertial systems with asymptotically vanishing viscosity and Hessian-driven damping\thanks{Submitted to the editors DATE.
\funding{This work was partially funded by the China Scholarship Council~202208520010, and also benefited from the support of the FMJH Program Gaspard Monge for optimization and operations research and their interactions with data science.}}}
\author{Zepeng Wang\thanks{Bernoulli Institute for Mathematics, Computer Science and Artificial Intelligence, University of Groningen, Groningen, The Netherlands 
  (\email{zepeng.wang@rug.nl}).}
\and Juan Peypouquet\thanks{Bernoulli Institute for Mathematics, Computer Science and Artificial Intelligence, University of Groningen, Groningen, The Netherlands 
  (\email{j.g.peypouquet@rug.nl}).}
}
\begin{document}
\maketitle

% REQUIRED
\begin{abstract}
We study the convergence rate of a family of inertial algorithms, which can be obtained by discretization of an inertial system combining asymptotic vanishing viscous and Hessian-driven damping. We establish a fast sublinear convergence rate in case the objective function is convex and satisfies Polyak-\L ojasiewicz inequality. We also establish a linear convergence rate for strongly convex functions. The results can provide more insights into the convergence property of Nesterov's accelerated gradient method.     
\end{abstract}

% REQUIRED
\begin{keywords}
convex optimization, asymptotic vanishing damping, Hessian-driven damping, Polyak-\L ojasiewicz, strong convexity
\end{keywords}

% REQUIRED
\begin{MSCcodes}
34D05, 65K05, 65K10, 90C25 
\end{MSCcodes}

%%%%%%%%%%%%%%%%%%%%%%%%%%%%%%%%%%%%%%%%
%%%%%%%%%%%%%%%%%%%%%%%%%%%%%%%%%%%%%%%%
\section{Introduction}
Consider the optimization problem
\begin{equation}\label{Problem: P}
\min_{x\in H} f(x),
\end{equation}
where $f:H\to\mathbb{R}$ is a twice differentiable convex function on a real Hilbert space $H$, whose gradient is $L$-Lipschitz continuous for some $L>0$. We assume that the solution set $S:=\argmin(f)$ is nonempty, and write $f^*=\min(f)$. 

Nesterov's accelerated gradient algorithm \cite{Nesterov_1983}, which may take the form:
\begin{equation}\label{E: NAG}\tag{NAG}
\left\{
\begin{array}{rcl}
y_{k+1} &=& x_k - s \nabla f(x_k),\\
x_{k+1} &=& y_{k+1} + \frac{k}{k+\alpha}(y_{k+1}-y_k),
\end{array}
\right.
\end{equation}
where $0<s\le \frac{1}{L}$ and $\alpha\ge 3$, is often used to solve problem \eqref{Problem: P}, ensuring a convergence rate of $f(y_{k+1}) - f^* \le \mathcal{O}\left( \frac{1}{k^2} \right)$ \cite{Nesterov_1983}. For $\alpha > 3$, one obtains a faster convergence rate $f(y_{k+1}) - f^* \le o\left( \frac{1}{k^2} \right)$ \cite{Attouch_2016}, and the sequence of iterates is weakly convergent \cite{Chambolle_2015}. If $f$ is convex, and satisfies a local error bound condition, an improved sublinear convergence rate $o\left( \frac{1}{k^{2\alpha}} \right)$ was derived in \cite{Liu_2024} for $\alpha>1$, $0<s<\frac{1}{L}$ and $k$ sufficiently large. Although it is not mentioned in \cite{Liu_2024}, a {\it big $\mathcal{O}$} result can be derived from their analysis. If $f$ is $\mu$-strongly convex, a linear convergence rate for the function values was shown possible in \cite{Shi_2024}, and the best rate is approximately $\mathcal{O}\left( \frac{1}{k^2}\left( 1 - \frac{\mu}{4L} \right)^k \right)$, with $s=\frac{1}{2L}$ \cite{Bao_2023}. In the subcritical case $0<\alpha<3$, \eqref{E: NAG} ensures a convergence rate of $\mathcal{O}\left( k^{-\frac{2}{3}\alpha} \right)$ if $f$ is convex \cite{Attouch_2019,Aujol_2020}. The proximal version of \eqref{E: NAG}, known as FISTA \cite{Beck_2009}, is able to solve convex composite optimization problems. In the convex case, the convergence rate is $\mathcal{O}\left( \frac{1}{k^2} \right)$ if $\alpha\ge 3$ \cite{Beck_2009}, and $o\left( \frac{1}{k^2} \right)$ if $\alpha>3$ \cite{Attouch_2016}, and otherwise $\mathcal{O}\left( k^{-\frac{2}{3}\alpha} \right)$ if $0<\alpha<3$ \cite{Attouch_2019,Aujol_2020}. When the smooth part is $\mu$-strongly convex, a linear convergence rate of up to $\mathcal{O}\left( \frac{1}{k^2}\left( 1 - \frac{\mu}{4L} \right)^k \right)$ is guaranteed for $\alpha\ge 3$ \cite{Shi_2024,Bao_2023}. To the best of our knowledge, no linear convergence rate has been found in the subcritical case. A variant of \eqref{E: NAG}, usually referred to as {\it Optimized Gradient Method} \cite{Drori_2014,Kim_2016,Kim_2017,Park_2023}, provides an improved convergence rate for convex functions. Local geometric properties of the convex objective function (quadratic growth or \L ojasiewicz inequality) also have an effect on the convergence rate of \eqref{E: NAG} \cite{Aujol_2021}.        

Nesterov's accelerated gradient algorithm \eqref{E: NAG} can be seen as a discrete analogue of an inertial system with asymptotic vanishing damping \cite{Su_2016}:
\begin{equation}\label{E: AVD}
\ddot{x}(t) + \frac{\alpha}{t}\dot{x}(t) + \nabla f\big(x(t)\big) = 0,
\end{equation}
with $\alpha\ge3$, with convergence rate $f\big(x(t)\big) - f^* \le \mathcal{O}\left( \frac{1}{t^2} \right)$ if $f$ is convex, and $f\big(x(t)\big) - f^* \le \mathcal{O}\left( t^{-\frac{2}{3}\alpha} \right)$ (with no hope for a linear rate) if $f$ is strongly convex \cite{Su_2016}. For $\alpha>3$, the trajectory convergence is guaranteed \cite{Attouch_2018}, and $f\big(x(t)\big)-f^*\le o\left(\frac{1}{t^2}\right)$ \cite{May_2017}. For $0<\alpha<3$, one obtains $f\big(x(t)\big) - f^* \le \mathcal{O}\left( t^{-\frac{2}{3}\alpha} \right)$ if $f$ is convex \cite{Attouch_2019}. Convergence rate analysis of \eqref{E: AVD} under local geometry of the convex function $f$ can be found in \cite{Aujol_2019}. However, the system \eqref{E: AVD} exhibits wild trajectory oscillations in practice, which motivates inclusion of a Hessian-driven damping term in \eqref{E: AVD} so as to attenuate the oscillations \cite{Attouch_2016_Hessian}:
\begin{equation}\label{E: AVD-H-basic}
\ddot{x}(t) + \frac{\alpha}{t}\dot{x}(t) + \beta\nabla^2 f\big(x(t)\big) \dot{x}(t) + \nabla f\big(x(t)\big) = 0,
\end{equation}
where $\alpha\ge 3$ and $\beta>0$. This system preserves most properties of \eqref{E: AVD}, including $f\big(x(t)\big) - f^* \le o\left( \frac{1}{t^2} \right)$ if $\alpha>3$, and also a faster convergence of the gradient to zero \cite{Attouch_2016_Hessian}. Under a high-resolution perspective, the following system \cite{Shi_2022}
\begin{equation}\label{E: AVD-H-Nesterov}
\ddot{x}(t) + \frac{3}{t}\dot{x}(t) + \sqrt{s}\nabla^2 f\big(x(t)\big) \dot{x}(t) + \left(1 + \frac{3\sqrt{s}}{2t} \right) \nabla f\big(x(t)\big) = 0,
\end{equation}
turns out to be the limiting ordinary-differential-equation (ODE) for \eqref{E: NAG}. If $f$ is $\mu$-strongly convex, a linear convergence rate of $f\big(x(t)\big) - f^* \le \mathcal{O}\left( \frac{1}{t^2}e^{-\frac{\mu\sqrt{s}}{4}t} \right)$, for $t\ge \frac{4}{\mu\sqrt{s}}$ and $0<s<\frac{1}{L}$, holds for the solutions of \eqref{E: AVD-H-Nesterov} \cite{Shi_2024}. A generalization of the systems \eqref{E: AVD-H-basic} and \eqref{E: AVD-H-Nesterov}, with more general coefficients, was studied in \cite{Attouch_2022}. 

In this paper, and in line with \eqref{E: AVD-H-Nesterov}, we consider the inertial system
\begin{equation}\label{System: AVD-H}\tag{AVD-H}
\ddot{x}(t) + \frac{\alpha}{t}\dot{x}(t) + \beta\nabla^2 f\big(x(t)\big)\dot{x}(t) + \left( \gamma + \frac{r}{t} \right) \nabla f\big(x(t)\big) = 0,
\end{equation}
with $\alpha,\beta,\gamma,r > 0$, and initial conditions given by $x(0) = x_0$ and $\dot{x}(0) = v_0$ (for the issue of existence and uniqueness of solution, the reader is referred to the discussion in \cite{Juan_2023}). By appropriately tuning the coefficients, we are able to establish the fastest convergence rates up to date, which are valid for all $\alpha>0$. This knowledge allows us to transfer some of these results to accelerated gradient algorithms, for which we obtain new convergence rates. 

Our results are organized in three major sections:
Section \ref{Sec: dynamics} contains the analysis of the solutions to the differential equation \eqref{System: AVD-H}. When the (not necessarily convex) objective function $f$ satisfies a Polyak-\L ojasiewicz (P\L) inequality with constant $\mu>0$, the function value gap $f\big(x(t)\big)-f^*$ behaves linearly for $t\le\frac{\alpha}{\mu\beta}$, then enters a $\mathcal O(\frac{1}{t^{2\alpha}})$ regime, and, if $f$ is convex, asymptotically behaves as $o(\frac{1}{t^{2\alpha}})$. Surprisingly, the convergence rates do not depend on $\gamma$ if one makes the natural choice $v_0=-\beta\nabla f(x_0)$, but do require $r$ to be properly tuned. If $f$ is $\mu$-strongly convex, then $f\big(x(t)\big)-f^*\le \mathcal O\big(\frac{1}{t^{2\alpha}}e^{-\frac{1}{2}\mu\beta t}\big)$. All the results in this section are valid for $\alpha>0$. 
In Section \ref{Sec: gradient_algorithms}, we analyze the convergence rate of a family of inertial gradient algorithms from a similar perspective, when $f$ is $L$-smooth. As in the continuous case, the function value gap goes through a linear regime, a $\mathcal O(\frac{1}{k^{2\alpha}})$ regime, and asymptotically behaves as $o(\frac{1}{k^{2\alpha}})$, provided $f$ is convex and satisfies a $\mu$-P\L\ inequality. In the $\mu$-strongly convex case, the convergence rate is $\mathcal O\big((1-\rho)^k\big)$, with $\rho=\frac{\mu}{4(L+\mu)}$. Finally, in Section \ref{Sec: proximal_algorithms}, we study a family of accelerated proximal-gradient algorithms, and establish a linear convergence rate under strong convexity.

%%%%%%%%%%%%%%%%%%%%%%%%%%%%%%%%%%%%%%%%
%%%%%%%%%%%%%%%%%%%%%%%%%%%%%%%%%%%%%%%%
\section{Convergence rates I: inertial dynamics}\label{Sec: dynamics}
In this section, we prove convergence rates of the function values along the trajectories defined by the inertial system \eqref{System: AVD-H}. Define the energy function $\varepsilon:(0,\infty)\to\mathbb{R}$ by
\begin{equation}\label{E: epsilon}
\varepsilon(t) = \frac{1}{2}\left\| \dot{x} + \beta\nabla f(x) \right\|^2 + \frac{\omega}{2}\| \dot{x} \|^2 + \theta ( f(x) - f^* ),
\end{equation}
with $\theta,\omega\in\mathbb{R}$. The values of $\omega$ and $\theta$ will be fixed later, but note that, if $\theta>0$ and $\omega\ge 0$, then $f(x)-f^*\le \frac{\varepsilon}{\theta}$. We have the following:

\begin{proposition} \label{P:dot_e}
Consider the system \eqref{System: AVD-H} with $\alpha,\beta,\gamma>0$ and $r = \frac{\alpha\beta}{1+\omega}$. Take any $\eta>0$, set $\theta = (1+\omega) \gamma - \frac{\omega\eta\beta^2}{1+\omega}$, and let $\varepsilon$ be defined by \eqref{E: epsilon}. For every $t>0$, we have
\begin{align*} 
\dot{\varepsilon}
&= -\frac{2\alpha}{t} \left( \frac{1}{2}\left\| \dot{x} + \beta\nabla f(x) \right\|^2 + \frac{\omega}{2} \| \dot{x} \|^2 \right)
   -\frac{\omega\eta\beta}{1+\omega}\left( \frac{1}{2}\| \dot{x} + \beta\nabla f(x) \|^2
   + \frac{\omega}{2}\| \dot{x} \|^2 \right) \\
&\quad - \frac{\beta}{1+\omega}\left( \theta + \frac{\omega(1-\omega)}{2(1+\omega)}\eta\beta^2 - \frac{\omega\alpha\beta}{t} \right) \left\| \nabla f(x) \right\|^2
   - \omega\beta\left\langle \nabla^2 f(x)\dot{x}, \dot{x} \right\rangle
   + \frac{\omega\eta\beta}{2}\| \dot{x} \|^2.
\end{align*}   
\end{proposition}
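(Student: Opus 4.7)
The approach I would take is direct differentiation of $\varepsilon$ along the trajectory, using \eqref{System: AVD-H} to eliminate $\ddot x$ (or $\ddot x + \beta\nabla^2 f(x)\dot x$ when convenient). Both the quantity $\dot\varepsilon$ and the claimed expression are quadratic forms in $\dot x$ and $\nabla f(x)$ together with the scalar $\langle\nabla^2 f(x)\dot x,\dot x\rangle$, so the natural strategy is to expand them both in the basis $\bigl\{\|\dot x\|^2,\ \langle \dot x,\nabla f(x)\rangle,\ \|\nabla f(x)\|^2,\ \langle\nabla^2 f(x)\dot x,\dot x\rangle\bigr\}$ and match coefficients.

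Concretely, I would split the computation into three pieces. First, I would exploit that \eqref{System: AVD-H} rewrites as $\tfrac{d}{dt}\bigl(\dot x+\beta\nabla f(x)\bigr) = -\tfrac{\alpha}{t}\dot x - (\gamma+\tfrac{r}{t})\nabla f(x)$, so differentiating $\tfrac12\|\dot x+\beta\nabla f(x)\|^2$ gives $-\tfrac{\alpha}{t}\langle \dot x+\beta\nabla f(x),\dot x\rangle - (\gamma+\tfrac{r}{t})\langle \dot x+\beta\nabla f(x),\nabla f(x)\rangle$, with no Hessian term appearing. Second, differentiating $\tfrac{\omega}{2}\|\dot x\|^2$ produces $\omega\langle \dot x,\ddot x\rangle$, into which I would substitute $\ddot x$ from the full ODE; this is the only source of the Hessian contribution $-\omega\beta\langle\nabla^2 f(x)\dot x,\dot x\rangle$. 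Third, $\theta(f(x)-f^*)$ simply contributes $\theta\langle\nabla f(x),\dot x\rangle$. Adding the three pieces and grouping by basis element, the coefficient of $\langle\dot x,\nabla f(x)\rangle$ becomes $-\tfrac{\alpha\beta}{t}-(1+\omega)(\gamma+\tfrac{r}{t})+\theta$, which is precisely where the prescribed values $r=\tfrac{\alpha\beta}{1+\omega}$ and $\theta=(1+\omega)\gamma-\tfrac{\omega\eta\beta^2}{1+\omega}$ are engineered to take effect: they collapse this coefficient to $-\tfrac{2\alpha\beta}{t}-\tfrac{\omega\eta\beta^2}{1+\omega}$, which, as will be seen, is exactly what the target expression requires.

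Finally, I would check that the right-hand side of the claimed identity expands to the same linear combination. This amounts to expanding each occurrence of $\|\dot x+\beta\nabla f(x)\|^2 = \|\dot x\|^2 + 2\beta\langle \dot x,\nabla f(x)\rangle + \beta^2\|\nabla f(x)\|^2$ and regrouping: the $\|\dot x\|^2$ coefficient reduces to $-\tfrac{\alpha(1+\omega)}{t}$ after the spurious $\pm\tfrac{\omega\eta\beta}{2}\|\dot x\|^2$ pieces cancel; the $\langle \dot x,\nabla f(x)\rangle$ and Hessian coefficients match by construction; and the $\|\nabla f(x)\|^2$ coefficient collapses to $-\beta(\gamma+\tfrac{r}{t})$ after substituting the explicit form of $\theta$ and using $\tfrac{1}{1+\omega}+\tfrac{1-\omega}{(1+\omega)^2}\cdot\tfrac{1+\omega}{1+\omega}$-type identities to cancel the $\omega\eta\beta^3$ contributions. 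The main obstacle is not conceptual but purely bookkeeping: the $\tfrac{1}{1+\omega}$ and $\tfrac{1}{(1+\omega)^2}$ denominators together with the interlocking cross-terms $\omega\eta\beta^2$ and $\omega\alpha\beta$ make a single sign or factor error fatal. I would therefore carry out the verification double-entry style, tabulating the contribution of each side to each of the four basis elements and comparing them one by one.
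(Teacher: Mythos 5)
Your proposal is correct and follows essentially the same route as the paper: differentiate $\varepsilon$, substitute the ODE for $\ddot{x}+\beta\nabla^2 f(x)\dot{x}$ and for $\ddot{x}$, and verify the identity by matching coefficients (your stated reductions of the $\|\dot x\|^2$, $\langle\dot x,\nabla f(x)\rangle$ and $\|\nabla f(x)\|^2$ coefficients all check out). The only cosmetic difference is that the paper regroups the cross term via $\beta\langle\nabla f(x),\dot x\rangle=\tfrac12\|\dot x+\beta\nabla f(x)\|^2+\tfrac{\omega}{2}\|\dot x\|^2-\tfrac{1+\omega}{2}\|\dot x\|^2-\tfrac{\beta^2}{2}\|\nabla f(x)\|^2$ to reach the target form directly, whereas you expand everything into the raw quadratic basis and compare coefficients term by term.
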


\begin{proof}
We begin by using \eqref{System: AVD-H}, to compute the time derivative of $\varepsilon$:
\begin{align*}
\dot{\varepsilon}
&= \left\langle \dot{x} + \beta\nabla f(x), \ddot{x} + \beta\nabla^2 f(x)\dot{x} \right\rangle + \omega\langle \dot{x}, \ddot{x} \rangle + \theta\langle \nabla f(x), \dot{x} \rangle \\
&= \left\langle \dot{x} + \beta\nabla f(x), -\frac{\alpha}{t}\dot{x} - \left( \gamma + \frac{r}{t} \right)\nabla f(x) \right\rangle
  + \theta\langle \nabla f(x), \dot{x} \rangle \\
&\quad + \omega\left\langle \dot{x}, -\frac{\alpha}{t}\dot{x} - \beta\nabla^2 f(x)\dot{x} - \left( \gamma + \frac{r}{t} \right)\nabla f(x)  \right\rangle \\
&= - \frac{\alpha}{t}\left\| \dot{x} + \beta\nabla f(x) \right\|^2
   - \left[ (1+\omega) \gamma - \theta + \frac{(1+\omega)r-\alpha\beta}{t} \right] \langle \nabla f(x), \dot{x} \rangle \\
&\quad - \beta\left( \gamma + \frac{r-\alpha\beta}{t} \right) \left\| \nabla f(x) \right\|^2 
    - \frac{\omega\alpha}{t}\| \dot{x} \|^2 
    - \omega\beta\left\langle \nabla^2 f(x)\dot{x}, \dot{x} \right\rangle.
\end{align*}
Substituting $r = \frac{\alpha\beta}{1+\omega}$, and then $\theta = (1+\omega) \gamma - \frac{\omega\eta\beta^2}{1+\omega}$, we obtain
\begin{align*} 
\dot{\varepsilon}
&= -\frac{2\alpha}{t} \left( \frac{1}{2}\left\| \dot{x} + \beta\nabla f(x) \right\|^2 + \frac{\omega}{2} \| \dot{x} \|^2 \right)
   -[(1+\omega) \gamma - \theta] \langle \nabla f(x), \dot{x} \rangle \\
&\quad - \beta\left( \gamma - \frac{\omega\alpha\beta}{(1+\omega)t} \right) \left\| \nabla f(x) \right\|^2
   - \omega\beta\left\langle \nabla^2 f(x)\dot{x}, \dot{x} \right\rangle \\
& = -\frac{2\alpha}{t} \left( \frac{1}{2}\left\| \dot{x} + \beta\nabla f(x) \right\|^2 + \frac{\omega}{2} \| \dot{x} \|^2 \right)
   -\frac{\omega\eta\beta}{1+\omega}\Big[\beta \langle \nabla f(x), \dot{x} \rangle\Big] \\
&\quad - \frac{\beta}{1+\omega}\left( \theta + \frac{\omega\eta\beta^2}{1+\omega} - \frac{\omega\alpha\beta}{t} \right) \left\| \nabla f(x) \right\|^2
   - \omega\beta\left\langle \nabla^2 f(x)\dot{x}, \dot{x} \right\rangle.
\end{align*}
We conclude by observing that
\begin{align*}
\beta \langle \nabla f(x), \dot{x} \rangle 
&= \frac{1}{2}\| \dot{x} + \beta\nabla f(x) \|^2
   - \frac{1}{2}\| \dot{x} \|^2
   - \frac{1}{2}\beta^2 \| \nabla f(x) \|^2  \\
&= \frac{1}{2}\| \dot{x} + \beta\nabla f(x) \|^2
   + \frac{\omega}{2}\| \dot{x} \|^2
   - \frac{1+\omega}{2}\| \dot{x} \|^2
   - \frac{\beta^2 }{2}\| \nabla f(x) \|^2,
\end{align*}
and then rearranging the terms.
\end{proof}

%%%%%%%%%%%%%%%%%%%%%%%%%%%%%%%%%%%%%%%%
%%%%%%%%%%%%%%%%%%%%%%%%%%%%%%%%%%%%%%%%
\subsection{Polyak-\L ojasiewicz inequality}
We now analyze the convergence rate of the function values when the objective function satisfies Polyak-\L ojasiewicz inequality with constant $\mu >0$, namely:
\begin{equation} \label{E: PL}
\| \nabla f(x) \|^2 \ge 2\mu( f(x) - f^* ).    
\end{equation}

We have the following:

\begin{theorem}\label{Thm: dynamics_PL}
Let $f$ be twice differentiable and satisfy Polyak-\L ojasiewicz inequality with constant $\mu > 0$. Consider the system \eqref{System: AVD-H}, where $\alpha,\beta,\gamma>0$ and $r= \alpha\beta$. Let $v_0 = -\beta\nabla f(x_0)$. For every $t>0$, we have  
\begin{equation*}
f(x) - f^* \le\left\{
\begin{array}{ll}
\left( f(x_0) - f^* \right) e^{-2\mu\beta t},&\quad t\le \frac{\alpha}{\mu\beta}, \\[5pt]
\left( f(x_0) - f^* \right) \left( \frac{\alpha}{\mu\beta e} \right)^{2\alpha}\frac{1}{t^{2\alpha}},&\quad t > \frac{\alpha}{\mu\beta}.
\end{array}
\right.
\end{equation*}
Moreover, if $f$ is convex, then $f\big( x(t) \big) - f^* \le o\left( \frac{1}{t^{2\alpha}} \right)$ as $t\to\infty$.
\end{theorem}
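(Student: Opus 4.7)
The plan is to specialize Proposition~\ref{P:dot_e} by choosing $\omega = 0$, which is consistent with the hypothesis $r = \alpha\beta$ and forces $\theta = \gamma$. Under this specialization every $\omega$-weighted term in the identity vanishes, leaving $\varepsilon(t) = \tfrac{1}{2}\|\dot{x} + \beta\nabla f(x)\|^2 + \gamma(f(x)-f^*)$ and $\dot\varepsilon(t) = -\tfrac{\alpha}{t}\|\dot{x} + \beta\nabla f\|^2 - \beta\gamma\|\nabla f\|^2$. The prescribed initial condition $v_0 = -\beta\nabla f(x_0)$ cancels the first summand at $t=0$, so $\varepsilon(0^+) = \gamma(f(x_0)-f^*)$.

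Next I apply the P\L{} inequality $\|\nabla f\|^2 \ge 2\mu(f-f^*)$ to the gradient term and rewrite $\tfrac{\alpha}{t}\|\dot{x}+\beta\nabla f\|^2 = \tfrac{2\alpha}{t}\cdot\tfrac{1}{2}\|\dot{x}+\beta\nabla f\|^2$, obtaining the master differential inequality $\dot\varepsilon(t) \le -\min(2\alpha/t,\, 2\mu\beta)\,\varepsilon(t)$. On the subinterval $[0, \alpha/(\mu\beta)]$ the damping coefficient is the constant $2\mu\beta$, and Gr\"onwall yields the exponential bound $\varepsilon(t)\le\varepsilon(0^+)e^{-2\mu\beta t}$. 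On $[\alpha/(\mu\beta),\infty)$ it equals $2\alpha/t$, so Gr\"onwall yields $\varepsilon(t) \le \varepsilon(\alpha/(\mu\beta))\cdot(\alpha/(\mu\beta t))^{2\alpha}$. Matching the two regimes at the boundary $t = \alpha/(\mu\beta)$ and dividing by $\gamma$ (using $\gamma(f-f^*)\le\varepsilon$) gives both claimed bounds.

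For the $o(t^{-2\alpha})$ refinement under convexity, I would study $W(t) := t^{2\alpha}\varepsilon(t)$. A direct differentiation combined with the P\L{} inequality yields $\dot W(t) \le 2\gamma\, t^{2\alpha-1}(\alpha - \mu\beta t)(f(x(t))-f^*)$, which is non-positive for $t \ge T := \alpha/(\mu\beta)$. Hence $W$ is non-increasing on $[T,\infty)$ and admits a finite limit $W_\infty \ge 0$; integration on $[2T,\infty)$, where $\mu\beta t - \alpha \ge \alpha$, additionally yields $\int_{2T}^\infty t^{2\alpha-1}(f(x(t))-f^*)\,dt < \infty$.

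The main obstacle is upgrading this integrability to the conclusion $W_\infty = 0$. My strategy is by contradiction: if $W_\infty > 0$, the finite integral forces $\liminf_{t\to\infty} t^{2\alpha}(f(x(t))-f^*) = 0$, so along some sequence $t_n\to\infty$ we have $t_n^{2\alpha}(f(x(t_n))-f^*)\to 0$. Coupled with $W(t_n)\to W_\infty$, this compels $t_n^{2\alpha}\cdot\tfrac{1}{2}\|\dot x(t_n)+\beta\nabla f(x(t_n))\|^2 \to W_\infty$, and the standing $L$-smoothness of $\nabla f$ (via $\|\nabla f\|^2 \le 2L(f-f^*)$) forces $\|\dot x(t_n)\|\sim\sqrt{2W_\infty}\,t_n^{-\alpha}$. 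I expect the contradiction to emerge by tracking the evolution of $\tfrac{1}{2}\|x(t)-x^*\|^2$ and exploiting the convex inequality $\langle\nabla f(x),x-x^*\rangle \ge f(x)-f^*$, to rule out such sustained velocity being compatible with $f-f^*$ decaying on average fast enough to keep the weighted integral finite.
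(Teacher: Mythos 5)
Your first two bounds are correct and follow the paper's own route exactly: $\omega=0$, $\theta=\gamma$ in Proposition~\ref{P:dot_e}, the P\L{} inequality turning the identity into $\dot\varepsilon\le -2\min\{\alpha/t,\mu\beta\}\varepsilon$, Gr\"onwall on each regime, and the initial condition $v_0=-\beta\nabla f(x_0)$ giving $\varepsilon(0)=\gamma(f(x_0)-f^*)$. Your computation $\dot W\le 2\gamma t^{2\alpha-1}(\alpha-\mu\beta t)(f(x)-f^*)$ for $W=t^{2\alpha}\varepsilon$ is also correct and is the same Lyapunov bookkeeping the paper uses to obtain the weighted integrability.

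The $o(t^{-2\alpha})$ part, however, has a genuine gap: the last step is announced (``I expect the contradiction to emerge\dots'') but not carried out, and the announced strategy is doubtful. You aim to prove $W_\infty=0$, which is stronger than what the theorem asserts --- the statement only requires $t^{2\alpha}(f(x(t))-f^*)\to 0$, and a priori the limit $W_\infty$ could be carried entirely by the kinetic term $t^{2\alpha}\tfrac12\|\dot x+\beta\nabla f(x)\|^2$. Worse, the contradiction you hope for does not obviously materialize: a velocity of order $\sqrt{2W_\infty}\,t_n^{-\alpha}$ \emph{along a sequence} constrains nothing between the times $t_n$, is integrable when $\alpha>1$ (hence perfectly compatible with convergence of $x(t)$), and the convexity inequality $\langle\nabla f(x),x-x^*\rangle\ge f(x)-f^*$ gives no quantitative obstruction here. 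A second, related issue is that you only record $\int_{2T}^\infty t^{2\alpha-1}(f-f^*)\,dt<\infty$, which merely yields $\liminf t^{2\alpha}(f-f^*)=0$; the same inequality, using $\mu\beta t-\alpha\ge\tfrac12\mu\beta t$ for $t\ge 2T$, gives the stronger $\int_{2T}^\infty t^{2\alpha}(f-f^*)\,dt<\infty$, and that extra power of $t$ is exactly what closes the argument. The paper finishes with a Barbalat-type step rather than a contradiction: set $g(t)=t^{2\alpha}(f(x(t))-f^*)\ge 0$, which is integrable by the above, and bound
\begin{equation*}
|\dot g(t)|\le 2\alpha t^{2\alpha-1}(f(x)-f^*)+t^{2\alpha}\bigl|\langle\nabla f(x),\dot x\rangle\bigr|
\end{equation*}
using the already established $\mathcal O(t^{-2\alpha})$ rates for $\varepsilon$, for $\|\dot x+\beta\nabla f(x)\|^2\le 2\varepsilon$, and for $\|\nabla f(x)\|^2\le 2L(f(x)-f^*)$; this shows $|\dot g|$ is bounded, and a nonnegative integrable function with bounded derivative tends to zero. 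You should replace your contradiction sketch with this (or an equivalent uniform-continuity) argument applied directly to $t^{2\alpha}(f(x(t))-f^*)$, not to $W$.
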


\begin{proof}
Set $\omega=0$ and $\eta=\mu$ in Proposition \ref{P:dot_e}, and use the Polyak-\L ojasiewicz inequality \eqref{E: PL} to obtain
$$\dot{\varepsilon} 
= - \frac{\alpha}{t}\left\| \dot{x} + \beta\nabla f(x) \right\|^2 - \beta\theta \left\| \nabla f(x) \right\|^2
\le - \frac{\alpha}{t}\left\| \dot{x} + \beta\nabla f(x) \right\|^2
- 2\mu\beta\gamma (f(x) - f^*),$$
($\theta=\gamma>0$). It follows that $ \dot{\varepsilon} \le - 2\min\left\{ \frac{\alpha}{t}, \mu\beta \right\} \varepsilon$, which results in
\begin{equation*}
\varepsilon(t) \le \left\{
\begin{array}{rcl}
\varepsilon(0) e^{-2\mu\beta t}, & & t \le T:=\frac{\alpha}{\mu\beta},\\
\frac{\varepsilon(T)}{ \left( t/T \right)^{2\alpha} }, & & t > T. 
\end{array}
\right.
\end{equation*}
If $v_0 = -\beta\nabla f(x_0)$, we have $\varepsilon(0) = \gamma\left( f(x_0) - f^* \right)$ and $$\varepsilon(T) \le \varepsilon(0) e^{-2\mu\beta T} = \gamma\left( f(x_0) - f^* \right) e^{-2\alpha}.$$
The bound is obtained by noting that $f(x) - f^* \le \frac{\varepsilon(t)}{\gamma}$. 

To prove that $f\big(x(t)\big) - f^* \le o\left( \frac{1}{t^{2\alpha}} \right)$ as $t\to\infty$, we can assume $t > 2 T$, so that
$$\dot{\varepsilon} 
\le - \frac{\alpha}{t}\left\| \dot{x} + \beta\nabla f(x) \right\|^2
- 2\mu\beta\gamma (f(x) - f^*)
\le - \frac{2\alpha}{t}\varepsilon - \mu\beta\gamma( f(x) - f^* ). $$
Multiplying by $t^{2\alpha}$ gives
$$ \mu\beta\gamma t^{2\alpha}( f(x) - f^* ) \le -\frac{d}{dt}\left( t^{2\alpha}\varepsilon \right). $$
Integrating from $2T$ to $t$ results in
$$ \mu\beta\gamma \int_{2T}^{t} \tau^{2\alpha} ( f(x(\tau)) - f^* )\,d\tau \le -  
\left. \tau^{2\alpha}\varepsilon(\tau)\right|_{\tau=2T}^t
\le (2T)^{2\alpha}\varepsilon(2T) < \infty. $$
Write $g(t) = \beta t^{2\alpha}( f(x) - f^* )$, so that $g(t)\ge 0$ and $\int_{0}^{\infty} g(\tau)\,d\tau < \infty$. From
$$ \dot{g}(t) = 2\alpha\beta t^{2\alpha-1}( f(x) - f^* ) + t^{2\alpha}\langle \beta \nabla f(x), \dot{x} \rangle,$$
it follows that
$$| \dot{g}(t) | 
\le  2\alpha\beta t^{2\alpha-1}( f(x) - f^* ) + t^{2\alpha} \left( \frac{ \| \dot{x} + \beta\nabla f(x) \|^2 + 3\| \beta\nabla f(x) \|^2  }{2} \right).$$
But, $\varepsilon\le\mathcal O(\frac{1}{t^{2\alpha}})$, $\| \dot{x} + \beta\nabla f(x) \|^2\le\mathcal O(\frac{1}{t^{2\alpha}})$ and $\frac{1}{2L}\| \nabla f(x) \|^2\le f(x) - f^*\le\mathcal O(\frac{1}{t^{2\alpha}})$. This shows that $|\dot g|$ is bounded on $(0,\infty)$. Since $g$ is integrable, this means that $\lim_{t\to\infty} g(t) = 0 $, and thus $f(x) - f^* \le o\left( \frac{1}{t^{2\alpha}}  \right)$, as desired.
\end{proof}

\begin{remark}
For these kinds of systems, the best results so far were $\mathcal{O}\left( t^{-\frac{2}{3}\alpha} \right)$, which were obtained for 
$$ \ddot{x} + \frac{\alpha}{t}\dot{x} + \beta\nabla^2 f(x)\dot{x} + \nabla f(x) = 0,\quad\alpha\ge 3,\ \beta\ge  0,$$
assuming strong convexity \cite{Su_2016,Attouch_2016_Hessian,Attouch_2018}. We have established a faster convergence rate, namely $\mathcal{O}\left( t^{-2\alpha} \right)$, which is valid for all $\alpha>0$ and only assuming the P\L\ inequality, even without convexity.
\end{remark}

\begin{remark}
The turning point from the linear regime to $\mathcal{O}\left( t^{-2\alpha} \right)$ occurs after the function value gap has been reduced by a factor of $e^{2\alpha}$. This takes place earlier if $\mu$ is larger.
\end{remark}

%%%%%%%%%%%%%%%%%%%%%%%%%%%%%%%%%%%%%%%%
%%%%%%%%%%%%%%%%%%%%%%%%%%%%%%%%%%%%%%%%
\subsection{Strong convexity}
In this subsection, we analyze the convergence rate of the function values when the objective function is strongly convex. Although {\it it is not possible} \cite{Su_2016} to obtain a linear rate if $\beta=0$, we do find one when $\beta>0$. 

\begin{theorem}
Let $f$ be twice differentiable and $\mu$-strongly convex. Consider the system \eqref{System: AVD-H}, where, 
$$ \alpha > 0,\quad \beta > 0,\quad \gamma>\frac{1}{4}\mu\beta^2,\quad r= \frac{1}{2}\alpha\beta. $$
For every $t\ge T:=\frac{4\alpha}{\mu\beta} + \frac{4\alpha\beta}{4\gamma-\mu\beta^2}$, we have, 
$$ f(x) - f^* \le \frac{e^{-\frac{1}{2}\mu\beta (t-T)}}{\left( t/T \right)^{2\alpha}} C_T, $$
with $C_T = \frac{1}{2}\| v(T) + \beta\nabla f(x(T)) \|^2 + \frac{1}{2}\| v(T) \|^2 + \left(2\gamma-\frac{1}{2}\mu\beta^2\right)\big( f(x(T)) - f^* \big)$.
\end{theorem}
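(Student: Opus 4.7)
The plan is to invoke Proposition~\ref{P:dot_e} with a parameter choice tailored to exploit strong convexity. The prescription $r=\frac{1}{2}\alpha\beta$ forces $\omega=1$, and I would set $\eta=\mu$, so that $\theta=2\gamma-\frac{1}{2}\mu\beta^2$ is strictly positive precisely under the standing hypothesis $\gamma>\frac{1}{4}\mu\beta^2$, and therefore still supports $f(x)-f^*\le\varepsilon/\theta$. The key simplification is that strong convexity gives $\langle\nabla^2 f(x)\dot{x},\dot{x}\rangle\ge\mu\|\dot{x}\|^2$, so the combination $-\omega\beta\langle\nabla^2 f(x)\dot{x},\dot{x}\rangle+\frac{\omega\eta\beta}{2}\|\dot{x}\|^2\le -\frac{1}{2}\mu\beta\|\dot{x}\|^2\le 0$ can simply be dropped. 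Since $\omega(1-\omega)=0$, the $\|\nabla f(x)\|^2$ coefficient in Proposition~\ref{P:dot_e} cleans up to $-\frac{\beta}{2}\bigl(\theta-\frac{\alpha\beta}{t}\bigr)$, leaving
\begin{equation*}
\dot{\varepsilon}\ \le\ -\left(\frac{2\alpha}{t}+\frac{\mu\beta}{2}\right)K\ -\ \frac{\beta}{2}\left(\theta-\frac{\alpha\beta}{t}\right)\|\nabla f(x)\|^2,
\end{equation*}
where $K:=\frac{1}{2}\|\dot{x}+\beta\nabla f(x)\|^2+\frac{1}{2}\|\dot{x}\|^2$.

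The next step is to show that this right-hand side is dominated by $-\bigl(\frac{2\alpha}{t}+\frac{\mu\beta}{2}\bigr)\varepsilon$. Since $\varepsilon=K+\theta(f(x)-f^*)$, this is equivalent to the pointwise condition
\begin{equation*}
\frac{\beta}{2}\left(\theta-\frac{\alpha\beta}{t}\right)\|\nabla f(x)\|^2\ \ge\ \left(\frac{2\alpha}{t}+\frac{\mu\beta}{2}\right)\theta\bigl(f(x)-f^*\bigr),
\end{equation*}
and the strong-convexity bound $\|\nabla f(x)\|^2\ge 2\mu(f(x)-f^*)$ collapses it to a scalar inequality in $t$ alone. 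A short manipulation shows that it is equivalent to $t\ge \frac{4\alpha}{\mu\beta}+\frac{2\alpha\beta}{\theta}$, which, after substituting $\theta=\frac{1}{2}(4\gamma-\mu\beta^2)$, rewrites as exactly $t\ge T$.

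Once $\dot{\varepsilon}\le -\bigl(\frac{2\alpha}{t}+\frac{\mu\beta}{2}\bigr)\varepsilon$ holds on $[T,\infty)$, dividing by $\varepsilon$ and integrating from $T$ to $t$ yields $\varepsilon(t)\le \varepsilon(T)\,(T/t)^{2\alpha}\,e^{-\frac{1}{2}\mu\beta(t-T)}$, and $f(x)-f^*\le\varepsilon(t)/\theta$ then delivers the announced decay with $C_T$ identified (up to the $1/\theta$ scaling) as $\varepsilon(T)=\frac{1}{2}\|v(T)+\beta\nabla f(x(T))\|^2+\frac{1}{2}\|v(T)\|^2+\theta\bigl(f(x(T))-f^*\bigr)$. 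The main obstacle is the balancing act in the middle step: one has to spend part of the $\|\nabla f\|^2$ coefficient on producing the exponential factor through strong convexity while reserving enough to absorb the polynomial term $\frac{2\alpha}{t}\theta(f-f^*)$ coming from $-\frac{2\alpha}{t}K$. The pair $\omega=1$, $\eta=\mu$ is exactly what makes these two decay mechanisms coexist (and is the reason the $\omega(1-\omega)$ correction disappears), which in turn pins down both the threshold $T$ and the exponential rate $\frac{1}{2}\mu\beta$ in the statement.
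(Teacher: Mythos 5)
Your proposal is correct and takes essentially the same route as the paper: choose $\omega=1$, $\eta=\mu$ in Proposition~\ref{P:dot_e}, use strong convexity both to absorb $-\beta\langle\nabla^2 f(x)\dot{x},\dot{x}\rangle+\tfrac{\mu\beta}{2}\|\dot{x}\|^2$ and to convert the $\|\nabla f(x)\|^2$ term into $f(x)-f^*$, verify that the leftover coefficient is nonpositive exactly when $t\ge T$, and integrate $\dot{\varepsilon}\le-\bigl(\tfrac{2\alpha}{t}+\tfrac{\mu\beta}{2}\bigr)\varepsilon$. Your parenthetical about the $1/\theta$ scaling in the final step is a fair observation, and the paper glosses over the same point.
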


\begin{proof}
Setting $\omega=1$ and $\eta=\mu$, we have $\varepsilon = \frac{1}{2}\left\| \dot{x} + \beta\nabla f(x) \right\|^2 + \frac{1}{2}\| \dot{x} \|^2 + \theta (f(x) - f^*)$, where $\theta = 2\gamma - \frac{1}{2}\mu\beta^2>0$. Since $f$ is $\mu$-strongly convex, we have $\left\langle \nabla^2 f(x)\dot{x}, \dot{x} \right\rangle \ge \mu\| \dot{x} \|^2$. Using Proposition \ref{P:dot_e}, we obtain
\begin{align*}
\dot{\varepsilon}
&\le -\frac{2\alpha}{t} \left( \frac{1}{2}\left\| \dot{x} + \beta\nabla f(x) \right\|^2 + \frac{1}{2} \| \dot{x} \|^2 \right) 
   - \frac{\mu\beta}{2} \left( \frac{1}{2}\left\| \dot{x} + \beta\nabla f(x) \right\|^2 + \frac{1}{2} \| \dot{x} \|^2 \right) \\
&\quad - \mu\beta\left( \theta - \frac{\alpha\beta}{t} \right) \left( f(x) - f^* \right) \\
& = -\left(\frac{2\alpha}{t}+\frac{\mu\beta}{2}\right) \varepsilon  - \mu\beta\left( \frac{\theta}{2} - \frac{\alpha\beta + \frac{2\alpha\theta}{\mu\beta}}{t} \right) ( f(x) - f^* )\\
&\le -\left(\frac{2\alpha}{t}+\frac{\mu\beta}{2}\right) \varepsilon, 
\end{align*}
where the last inequality is due to $t\ge \frac{4\alpha}{\mu\beta} + \frac{4\alpha\beta}{4\gamma-\mu\beta^2}=T$. This immediately gives the result.
\end{proof}

\begin{remark}
A convergence rate of 
$f(x) - f^* \le \mathcal{O}\left( \frac{1}{t^{2\alpha}}e^{-\frac{1}{2}\mu\beta t} \right)$ holds for the system
$$ \ddot{x} + \frac{\alpha}{t}\dot{x} + \beta\nabla^2 f(x)\dot{x} + \left( 1 + \frac{\alpha\beta}{2t} \right) \nabla f(x) = 0 ,$$
when $\alpha>0$, $0<\beta<\frac{2}{\sqrt{\mu}}$ and $t$ is large enough. The case $\alpha=3$ was studied in \cite{Shi_2024}. Assuming $0<\beta<\frac{1}{\sqrt{L}}$, they obtain a convergence rate of $\mathcal{O}\left( \frac{1}{t^2}e^{-\frac{1}{4}\mu\beta t} \right)$. The exponent in the linear part is half as large as ours, and that of the sublinear part is just one third of ours, all under a stronger constraint for $\beta$.
\end{remark}

%%%%%%%%%%%%%%%%%%%%%%%%%%%%%%%%%%%%%%%%
%%%%%%%%%%%%%%%%%%%%%%%%%%%%%%%%%%%%%%%%
\section{Convergence rates II: gradient algorithms}\label{Sec: gradient_algorithms}
In this section, we develop accelerated gradient algorithms and prove their convergence rates for the function values. The algorithms are obtained by discretizing the dynamics \eqref{System: AVD-H} and attain different rates depending on the geometric characteristics of the objective function.

Let $f$ be $L$-smooth, define the step size $h\in\left(0,\frac{1}{\sqrt{L}}\right]$ and introduce the velocity
$$ v_k := \frac{x_{k+1}-x_k}{h}. $$
The term involving Hessian-driven damping is approximated by
$$\nabla^2 f(x)\dot{x} \sim \frac{\nabla f(x_k) - \nabla f(x_{k-1})}{h}.$$
We combine this with
\begin{equation*}
\left\{
\begin{array}{ccl}
t &\sim& kh,\\
\ddot{x} &\sim& \frac{v_k - v_{k-1}}{h},\\
\dot{x} &\sim& v_k,\\
\nabla f(x) &\sim& \nabla f(x_k), 
\end{array}
\right.
\end{equation*}
and set $\beta=h$ and $r=\alpha\beta=\alpha h$, to obtain
\begin{equation}\label{inertial_iterates}
( v_k - v_{k-1} ) + \frac{\alpha}{k}v_k + h\left( \nabla f(x_k) - \nabla f(x_{k-1}) \right) + \left( \gamma + \frac{\alpha}{k} \right) h \nabla f(x_k) = 0.
\end{equation}
Setting $y_{k+1} = x_k - h^2\nabla f(x_k)$, we can rewrite \eqref{inertial_iterates} as
\begin{equation}\label{Algo: AGM}\tag{AGM}
\left\{\begin{array}{ccl}
y_{k+1} &=& x_k - h^2\nabla f(x_k) \\
x_{k+1} &=& y_{k+1} + \frac{k}{k+\alpha}\left( y_{k+1} - y_k \right) + \frac{k}{k+\alpha}(\gamma-1) ( y_{k+1} - x_k ).
\end{array}
\right.
\end{equation}
The convergence rates for this algorithm will be investigated in what follows.

%%%%%%%%%%%%%%%%%%%%%%%%%%%%%%%%%%%%%%%%
%%%%%%%%%%%%%%%%%%%%%%%%%%%%%%%%%%%%%%%%
\subsection{General estimations}
In this subsection, we derive some useful estimations for proving the convergence rate of the algorithm \eqref{Algo: AGM}. Our convergence proofs are based on the energy-like sequence $(E_k)_{k\ge 1}$, defined by
\begin{equation}\label{E: E_k}
E_k = \frac{1}{2}\| \phi_k \|^2 + \gamma\left[ (1-\omega)\psi_k + \omega\left( f(y_{k+1})-f^* \right) \right],
\end{equation}
where $0\le\omega\le 1$, and
\begin{align*}
\phi_k &:= v_k + h\nabla f(x_k),\\
\psi_k &:= f(x_k) - f^* - \frac{h^2}{2}\| \nabla f(x_k) \|^2.
\end{align*}

\begin{remark}
Notice that $y_{k+1} = x_k - h^2\nabla f(x_k)$ in \eqref{Algo: AGM}. Since $f$ is $L$-smooth and $h^2\le \frac{1}{L}$, we obtain
$$ f(y_{k+1}) \le f(x_k) - \frac{h^2}{2}\| \nabla f(x_k) \|^2.$$
This gives
$$ 0 \le f(y_{k+1}) - f^*  \le \psi_k,\qqbox{and} 0 \le f(y_{k+1}) - f^*  \le \frac{1}{\gamma}E_k.$$
\end{remark}

We have the following:

\begin{proposition} \label{P:Delta_Ek}
Let $f$ be convex and $L$-smooth, and set $0<h\le \frac{1}{\sqrt{L}}$. Consider the sequence $(E_k)_{k\ge 1}$ defined by \eqref{E: E_k}, where $0\le\omega\le 1$. We have  
\begin{align*}
E_{k+1} - E_k 
=&    -\tfrac{\alpha}{k+1}\left( 1 + \tfrac{\alpha/2}{k+1} \right)\| v_{k+1} + h\nabla f(x_{k+1}) \|^2
   -\gamma h^2 \left(1-\tfrac{\gamma}{2}\right)\| \nabla f(x_{k+1}) \|^2 \\
   & +\omega\gamma \left(f(x_{k+1}) - f(y_{k+1})  \right) 
   -\omega\gamma h\left(\tfrac{k+\alpha+1}{k+1}\right) \langle \nabla f(x_{k+1}), v_{k+1} \rangle \\
&  -\omega\gamma h^2 \left(1+\gamma+\tfrac{\alpha}{k+1}-\tfrac{L h^2}{2}\right)\| \nabla f(x_{k+1}) \|^2  .
\end{align*}
\end{proposition}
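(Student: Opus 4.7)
My plan is to decompose the increment as
\[
E_{k+1}-E_k = \tfrac12\bigl(\|\phi_{k+1}\|^2-\|\phi_k\|^2\bigr)
 + \gamma(1-\omega)(\psi_{k+1}-\psi_k) + \gamma\omega\bigl(f(y_{k+2})-f(y_{k+1})\bigr),
\]
and handle the three summands with three different tools: the discrete inertial equation \eqref{inertial_iterates}, the sharp convex-plus-$L$-smooth (``cocoercive'') inequality, and the descent lemma applied at $y_{k+2}$.

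For the kinetic part, I note that $\phi_{k+1}-\phi_k = (v_{k+1}-v_k)+h(\nabla f(x_{k+1})-\nabla f(x_k))$, so that \eqref{inertial_iterates} evaluated at index $k+1$ (with $\beta=h$ and $r=\alpha h$) rearranges into the compact identity $\phi_{k+1}-\phi_k = -\tfrac{\alpha}{k+1}\phi_{k+1}-\gamma h\nabla f(x_{k+1})$. Plugging this into $\tfrac12\|\phi_{k+1}\|^2-\tfrac12\|\phi_k\|^2 = \langle\phi_{k+1},\phi_{k+1}-\phi_k\rangle-\tfrac12\|\phi_{k+1}-\phi_k\|^2$, and then expanding $\langle\nabla f(x_{k+1}),\phi_{k+1}\rangle = \langle\nabla f(x_{k+1}),v_{k+1}\rangle+h\|\nabla f(x_{k+1})\|^2$, already produces the announced coefficient of $\|\phi_{k+1}\|^2$, together with a stray inner product $-\gamma h\bigl(1+\tfrac{\alpha}{k+1}\bigr)\langle\nabla f(x_{k+1}),v_{k+1}\rangle$ and several $\|\nabla f(x_{k+1})\|^2$ contributions that will be collected at the end.

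For $\psi_{k+1}-\psi_k$, I would invoke the sharp inequality $f(x_{k+1})-f(x_k)\le\langle\nabla f(x_{k+1}),x_{k+1}-x_k\rangle-\tfrac1{2L}\|\nabla f(x_{k+1})-\nabla f(x_k)\|^2$ (valid for any convex $L$-smooth $f$), substitute $x_{k+1}-x_k=hv_k$, and then use $v_k = \phi_k-h\nabla f(x_k) = \bigl(1+\tfrac{\alpha}{k+1}\bigr)\phi_{k+1}+\gamma h\nabla f(x_{k+1})-h\nabla f(x_k)$ (from the same recursion) to isolate a matching $+h\bigl(1+\tfrac{\alpha}{k+1}\bigr)\langle\nabla f(x_{k+1}),v_{k+1}\rangle$. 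Multiplied by $\gamma(1-\omega)$ and added to the kinetic part, this collapses the coefficient of $\langle\nabla f(x_{k+1}),v_{k+1}\rangle$ to the claimed $-\omega\gamma h\tfrac{k+\alpha+1}{k+1}$. The residual cross term $-h^2\langle\nabla f(x_{k+1}),\nabla f(x_k)\rangle$, together with the cocoercive penalty $-\tfrac1{2L}\|\nabla f(x_{k+1})-\nabla f(x_k)\|^2$ and the $\pm\tfrac{h^2}{2}\|\nabla f(x_k)\|^2$ pieces coming from the definition of $\psi$, cancel under the step-size constraint $h^2\le 1/L$ (the key algebraic identity $\tfrac{h^2}{2}\|a-b\|^2\le\tfrac1{2L}\|a-b\|^2$), eliminating every $\nabla f(x_k)$-contribution.

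Finally, for the $y$-piece, the descent lemma applied at $y_{k+2}=x_{k+1}-h^2\nabla f(x_{k+1})$ gives $f(y_{k+2})\le f(x_{k+1})-h^2\bigl(1-\tfrac{Lh^2}{2}\bigr)\|\nabla f(x_{k+1})\|^2$, which directly produces the $\gamma\omega\bigl(f(x_{k+1})-f(y_{k+1})\bigr)$ term and the $-Lh^2/2$ correction to the $\omega$-weighted gradient-norm coefficient. Summing the three contributions and regrouping then yields the stated formula. I expect the main obstacle to be the bookkeeping of the $\|\nabla f(x_{k+1})\|^2$ coefficients --- in particular, verifying that the tight cocoercive cancellation in the middle block is exactly what is needed to reconstitute the announced $-\gamma h^2(1-\gamma/2)$ and $-\omega\gamma h^2\bigl(1+\gamma+\tfrac{\alpha}{k+1}-\tfrac{Lh^2}{2}\bigr)$ coefficients, with no leftover terms in $\nabla f(x_k)$ or in the gradient-difference.
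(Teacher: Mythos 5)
Your proposal follows essentially the same route as the paper: the same three-way decomposition of $E_{k+1}-E_k$, the same compact form $\phi_{k+1}-\phi_k=-\tfrac{\alpha}{k+1}\phi_{k+1}-\gamma h\nabla f(x_{k+1})$ of the recursion, the cocoercivity-type inequality for $f(x_{k+1})-f(x_k)$ (weakened from $\tfrac{1}{2L}$ to $\tfrac{h^2}{2}$ via $h^2\le 1/L$), and the descent lemma at $y_{k+2}$, and the coefficient bookkeeping you outline does check out. Note only that, since two of the three blocks are estimated by inequalities, what both you and the paper actually establish is the statement with ``$\le$'' in place of ``$=$'' (which is how the proposition is invoked later), so the equality sign in the statement is a typo rather than a defect of your argument.
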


\begin{proof}
By definition of $(E_k)_{k\ge 1}$, we have
\begin{align} \label{E:Energy_diff}
E_{k+1} - E_k = & \left( \frac{1}{2}\| \phi_{k+1} \|^2 - \frac{1}{2}\| \phi_k \|^2 \right) \nonumber \\
& + (1-\omega)\gamma (\psi_{k+1} - \psi_k) + \omega\gamma ( f(y_{k+2}) - f(y_{k+1}) ).    
\end{align}
From \eqref{inertial_iterates}, we obtain
\begin{equation}\label{E: iterates}
( v_{k+1} - v_k ) + h\left( \nabla f(x_{k+1}) - \nabla f(x_k) \right)
= -\frac{\alpha}{k+1} v_{k+1} - \left( \gamma + \frac{\alpha}{k+1} \right) h\nabla f(x_{k+1}).
\end{equation}
As a result,
$$ \phi_{k+1} - \phi_k = -\frac{\alpha}{k+1} \left( v_{k+1} + h\nabla f(x_{k+1}) \right) - \gamma h\nabla f(x_{k+1}), $$
so that
\begin{align*}
\| \phi_{k+1} - \phi_k \|^2 
&= \left( \frac{\alpha}{k+1} \right)^2 \| v_{k+1} + h\nabla f(x_{k+1}) \|^2
  + \left( \gamma + \frac{2\alpha}{k+1} \right)\gamma h^2 \| \nabla f(x_{k+1}) \|^2 \\
&\quad + \frac{2\alpha}{k+1}\gamma h \langle \nabla f(x_{k+1}), v_{k+1} \rangle,
\end{align*}
and
\begin{align*}
&\quad \langle \phi_{k+1}, \phi_{k+1} - \phi_k \rangle \\
&= \left\langle v_{k+1} + h\nabla f(x_{k+1}), -\frac{\alpha}{k+1} \left( v_{k+1} + h\nabla f(x_{k+1}) \right) - \gamma h\nabla f(x_{k+1}) \right\rangle \\
&= -\frac{\alpha}{k+1}\| v_{k+1} + h\nabla f(x_{k+1}) \|^2
   -\gamma h \langle \nabla f(x_{k+1}), v_{k+1} \rangle
   -\gamma h^2 \| \nabla f(x_{k+1}) \|^2.
\end{align*}
Since $\frac{1}{2}\| \phi_{k+1} \|^2 - \frac{1}{2}\| \phi_k \|^2 = \langle \phi_{k+1}, \phi_{k+1} - \phi_k \rangle - \frac{1}{2}\| \phi_{k+1} - \phi_k \|^2$, it follows that
\begin{align}\label{E: phi_diff}
\frac{1}{2}\| \phi_{k+1} \|^2 - \frac{1}{2}\| \phi_k \|^2 
&= -\frac{\alpha}{k+1}\left( 1 + \frac{\alpha/2}{k+1} \right)\| v_{k+1} + h\nabla f(x_{k+1}) \|^2 \nonumber \\
&\quad    -\gamma h\left(\frac{k+\alpha+1}{k+1}\right) \langle \nabla f(x_{k+1}), v_{k+1} \rangle  \nonumber \\
&\quad  -\gamma h^2\left( 1 + \frac{1}{2}\gamma + \frac{\alpha}{k+1} \right) \| \nabla f(x_{k+1}) \|^2.
\end{align}
With $h\le\frac{1}{\sqrt{L}}$ in mind, we use the convexity and $L$-smoothness of $f$ to get
$$ f(x_{k+1}) - f(x_k) \le h \langle \nabla f(x_{k+1}), v_k \rangle - \frac{h^2}{2}\| \nabla f(x_{k+1}) - \nabla f(x_k) \|^2. $$
Using \eqref{E: iterates}, we obtain
$$ v_k = \frac{k+\alpha+1}{k+1} v_{k+1} + h\left( \nabla f(x_{k+1}) - \nabla f(x_k) \right) + \left( \gamma + \frac{\alpha}{k+1} \right) h \nabla f(x_{k+1}), $$
and then
\begin{align*}
f(x_{k+1}) - f(x_k)
&\le \frac{k+\alpha+1}{k+1} h \langle \nabla f(x_{k+1}), v_{k+1} \rangle
     + \frac{h^2}{2}\| \nabla f(x_{k+1}) \|^2 \\
&\quad  - \frac{h^2}{2}\| \nabla f(x_k) \|^2 
     + \left( \gamma + \frac{\alpha}{k+1} \right) h^2 \| \nabla f(x_{k+1}) \|^2.
\end{align*}
This gives
\begin{equation}\label{E: psi_diff}
\psi_{k+1} - \psi_k 
\le \frac{k+\alpha+1}{k+1} h \langle \nabla f(x_{k+1}), v_{k+1} \rangle
   + \left( \gamma + \frac{\alpha}{k+1} \right) h^2 \| \nabla f(x_{k+1}) \|^2.
\end{equation}
In view of $y_{k+2} = x_{k+1} - h^2\nabla f(x_{k+1})$ and $L$-smoothness of $f$, we have
$$ f(y_{k+2}) \le f(x_{k+1}) - h^2\left( 1 - \frac{1}{2}Lh^2 \right)\| \nabla f(x_{k+1}) \|^2, $$
from which we obtain
\begin{equation} \label{E:f_y_k+1}
f(y_{k+2}) - f(y_{k+1}) \le f(x_{k+1}) - f(y_{k+1}) - h^2\left( 1 - \frac{1}{2}Lh^2 \right)\| \nabla f(x_{k+1}) \|^2.  
\end{equation}
To conclude, we substitute \eqref{E: phi_diff}, \eqref{E: psi_diff} and \eqref{E:f_y_k+1} into \eqref{E:Energy_diff}, and rearrange the terms.
\end{proof}

%%%%%%%%%%%%%%%%%%%%%%%%%%%%%%%%%%%%%%%%
%%%%%%%%%%%%%%%%%%%%%%%%%%%%%%%%%%%%%%%%
\subsection{Convexity and Polyak-\L ojasiewicz inequality}
In this subsection, we prove the convergence rate of the function values for the algorithm \eqref{Algo: AGM} in case the objective function is convex and satisfies Polyak-\L ojasiewicz inequality.

\begin{theorem}\label{Thm: algo_PL}
Let $f$ be convex, $L$-smooth and satisfy Polyak-\L ojasiewicz inequality with constant $\mu$, where $L>\mu>0$. Generate the sequences $(x_k)$ and $(y_k)$ according to \eqref{Algo: AGM} with $\alpha>0$, $\gamma\in(0,2)$ and $0< h\le\frac{1}{\sqrt{L}}$.
%$$ \alpha>0,\quad \gamma\in(0,2),\quad 0< h\le\frac{1}{\sqrt{L}},\quad r=\alpha h. $$
Given $x_1$, set $v_1 = -h\nabla f(x_1)$. We have
\begin{equation*}
f(y_{k+1}) - f^*\le\left\{
\begin{array}{ll}
\frac{ f(x_1) - f^* }{ \left[ 1 + \mu h^2 (2-\gamma) \right]^{k-1} },&\quad 1\le k \le K,\\ [5pt]
\frac{f(x_1) - f^*}{\left[ 1 + \mu h^2 (2-\gamma) \right]^{K-1} } \left( \frac{K+1+\alpha}{k+1+\alpha} \right)^{2\alpha},&\quad k> K,
\end{array}
\right.
\end{equation*}
where $K$ is the largest integer such that $\mu h^2(2-\gamma)K^2-2\alpha K-\alpha^2\le 0$. Moreover, $f(y_{k+1}) - f^* \le o( \frac{1}{k^{2\alpha}})$ as $k\to\infty$.
\end{theorem}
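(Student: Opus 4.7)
The plan is to mirror the continuous proof of Theorem \ref{Thm: dynamics_PL} in the discrete setting, specializing Proposition \ref{P:Delta_Ek} to $\omega=0$ so that $E_k = \tfrac12\|\phi_k\|^2 + \gamma\psi_k$. With this choice the conclusion of that proposition collapses to two dissipation terms, and using $f(x_{k+1})-f^* = \psi_{k+1} + \tfrac{h^2}{2}\|\nabla f(x_{k+1})\|^2$ together with the P\L\ inequality (the hypothesis $L>\mu$ guarantees $\mu h^2<1$) yields $\|\nabla f(x_{k+1})\|^2 \ge 2\mu\psi_{k+1}$. Substituting produces the master estimate
\[
E_{k+1}-E_k \;\le\; -\tfrac{(k+1+\alpha)^2-(k+1)^2}{(k+1)^2}\cdot\tfrac12\|\phi_{k+1}\|^2 \;-\; \mu h^2(2-\gamma)\,\gamma\psi_{k+1}.
\]
The two coefficients play the roles of $2\alpha/t$ and $2\mu\beta$ from the continuous proof, and the integer $K$ in the statement is exactly the largest index at which the first coefficient still dominates the second.

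For $1\le k\le K-1$, the P\L\ coefficient is the smaller one; since $E_{k+1}\ge\gamma\psi_{k+1}$, this yields $(1+\mu h^2(2-\gamma))E_{k+1}\le E_k$, which iterates to the linear bound once we note that $v_1=-h\nabla f(x_1)$ forces $\phi_1=0$ and hence $E_1\le\gamma(f(x_1)-f^*)$. For $k\ge K$ the other coefficient is the smaller one, and using $E_{k+1}\ge\tfrac12\|\phi_{k+1}\|^2$ produces the cleaner recursion $(k+1+\alpha)^2 E_{k+1}\le(k+1)^2 E_k$, which iterates to
\[
E_k \;\le\; E_K\prod_{j=K}^{k-1}\Bigl(\tfrac{j+1}{j+1+\alpha}\Bigr)^2.
\]
The one genuinely technical step of the proof is the product comparison $\prod_{j=K}^{k-1}\bigl(\tfrac{j+1}{j+1+\alpha}\bigr)^2 \le \bigl(\tfrac{K+1+\alpha}{k+1+\alpha}\bigr)^{2\alpha}$, which I would establish by taking logarithms and combining the inequality $\log\bigl(1+\tfrac{\alpha}{j+1}\bigr)\ge\tfrac{\alpha}{j+1+\alpha}$ with the Riemann-sum bound $\sum_{j=K}^{k-1}(j+1+\alpha)^{-1}\ge\log\tfrac{k+1+\alpha}{K+1+\alpha}$. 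Chaining this with the linear bound evaluated at $k=K$ and applying $f(y_{k+1})-f^*\le E_k/\gamma$ yields both displayed estimates.

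For the $o(k^{-2\alpha})$ refinement, mimicking the continuous argument I would set $A_k := \prod_{j=K}^{k-1}\bigl(\tfrac{j+1+\alpha}{j+1}\bigr)^2$ and $G_k := A_k E_k$. Keeping both dissipation terms active in the sublinear phase, the master inequality can be rewritten as
\[
\tfrac{(k+1+\alpha)^2}{(k+1)^2}E_{k+1} \;\le\; E_k \;-\; \gamma\,C_k\,\psi_{k+1}, \qquad C_k := \mu h^2(2-\gamma) - \tfrac{(k+1+\alpha)^2-(k+1)^2}{(k+1)^2},
\]
with $C_k>0$ for $k\ge K$ by the definition of $K$ and $C_k\to\mu h^2(2-\gamma)$. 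Multiplying by $A_k$ yields $G_{k+1}\le G_k - \gamma C_k A_k\psi_{k+1}$, so $(G_k)$ is non-increasing and telescoping gives $\sum_{k\ge K} A_k\psi_{k+1}<\infty$; by non-negativity of the summand, $A_k\psi_{k+1}\to 0$, and combined with the reciprocal lower bound $A_k\ge\bigl(\tfrac{k+1+\alpha}{K+1+\alpha}\bigr)^{2\alpha}$ this gives $\psi_k=o(k^{-2\alpha})$, hence $f(y_{k+1})-f^*\le\psi_k=o(k^{-2\alpha})$. The main obstacle is orchestrating the simultaneous use of both dissipation terms in the sublinear phase: the $\phi$-term supplies the $(k+1+\alpha)^2/(k+1)^2$ factor that powers the telescoping product, while the residual P\L-contribution $C_k\psi_{k+1}$ is exactly the extra dissipation needed to upgrade $\mathcal{O}(k^{-2\alpha})$ to $o(k^{-2\alpha})$, and this interplay is what forces the slightly unusual quadratic characterization of $K$.
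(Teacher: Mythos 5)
Your proposal is correct, and the two-phase part of the argument is essentially identical to the paper's: same energy $E_k$ with $\omega=0$, same master estimate from Proposition \ref{P:Delta_Ek} combined with the P\L\ inequality (your coefficient $\frac{(k+1+\alpha)^2-(k+1)^2}{(k+1)^2}$ is exactly the paper's $\frac{2\alpha}{k+1}+\frac{\alpha^2}{(k+1)^2}$), the same $\min$ dichotomy governed by the quadratic defining $K$, and the same $\ln(1+x)\ge\frac{x}{x+1}$ plus integral-comparison bound on the product. The only place you genuinely diverge is the $o(k^{-2\alpha})$ refinement. The paper discards the $\frac{\alpha^2}{(k+1)^2}$ term, restricts to $k$ large enough that $\frac{2\alpha}{k+1}\le\frac12\mu h^2(2-\gamma)$, multiplies by the weights $(k+1+2\alpha)^{2\alpha}$, and must then prove the auxiliary monotonicity inequality $(k+1+2\alpha)^{2\alpha}\bigl(1+\frac{2\alpha}{k+1}\bigr)\ge(k+2+2\alpha)^{2\alpha}$ via the function $\xi$. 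You instead keep the exact recursion $\frac{(k+1+\alpha)^2}{(k+1)^2}E_{k+1}\le E_k-\gamma C_k\psi_{k+1}$ and use the exact product weights $A_k$, so the telescoping is automatic, no auxiliary monotonicity lemma is needed, and the argument is valid from $k=K$ onward (since $C_k=\mu h^2(2-\gamma)-a_{k+1}\ge C_K>0$ by maximality of $K$) rather than only ``for $k$ large enough.'' This is a cleaner execution of the same summability idea; it buys you a slightly sharper and more self-contained proof at no extra cost. One cosmetic point: in the linear phase, the justification is the usual $\min$ argument ($-a_{k+1}\tfrac12\|\phi_{k+1}\|^2-b\gamma\psi_{k+1}\le -b\,E_{k+1}$ when $b\le a_{k+1}$), not the inequality $E_{k+1}\ge\gamma\psi_{k+1}$ as you phrase it; the conclusion you draw is nonetheless the correct one.
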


\begin{proof}
Setting $\omega=0$, we have $E_{k+1} = \frac{1}{2}\| v_{k+1} + h\nabla f(x_{k+1}) \|^2 + \gamma\psi_{k+1}$. Using Proposition \ref{P:Delta_Ek} and the Polyak-\L ojasiewicz inequality, we obtain
\begin{align*}
E_{k+1} - E_k
&\le -\frac{\alpha}{k+1}\left( 1 + \tfrac{\alpha/2}{k+1} \right) \| v_{k+1} + h\nabla f(x_{k+1}) \|^2
     -\mu h^2(2-\gamma)\gamma \left( f(x_{k+1}) - f^* \right)\\
&\le -\frac{\alpha}{k+1}\left( 1 + \frac{\alpha/2}{k+1} \right) \| v_{k+1} + h\nabla f(x_{k+1}) \|^2
     -\mu h^2(2-\gamma)\gamma \psi_{k+1} \\
&\le -\min\left\{ \frac{2\alpha}{k+1} + \frac{\alpha^2}{(k+1)^2}, \mu h^2(2-\gamma) \right\} E_{k+1}.
\end{align*}
If $1\le k \le K$, we have
$$ E_k \le \frac{E_1}{ \left[ 1 + \mu h^2 (2-\gamma) \right]^{k-1} }. $$
If $k>K$, we obtain $(1+\frac{\alpha}{k})^2E_k \le E_{k-1}$, which, applied recursively, gives 
$$ E_k \le  \frac{E_K}{\prod_{m=K+1}^{k}\left( 1 + \frac{\alpha}{m} \right)^2}. $$
Notice that
\begin{align*}
\prod_{m=K+1}^{k}\left( 1 + \frac{\alpha}{m} \right)^2
&= \exp\left[ 2\sum_{m=K+1}^{k}\ln\left( 1 + \frac{\alpha}{m} \right) \right] \\
&\ge \exp\left[ 2\alpha\sum_{m=K+1}^{k}\frac{1}{m+\alpha} \right] \\
&\ge  \exp\left[ 2\alpha \ln\left(\frac{k+1+\alpha}{K+1+\alpha}\right) \right],
\end{align*}
where the first inequality is due to $\ln(1+x)\ge\frac{x}{x+1}$ for $x>0$, and the second inequality is due to $\sum_{m=K+1}^{k}\frac{1}{m+\alpha} \ge  \int_{K+1}^{k+1} \frac{1}{x+\alpha}\,d x $. Hence,
$$ E_k \le \left( \frac{K+1+\alpha}{k+1+\alpha} \right)^{2\alpha}E_K. $$
With $v_1 = -h\nabla f(x_1)$, we have
$$E_1 \le \gamma\left( f(x_1) - f^* \right),$$
and
$$E_K \le \frac{E_1}{ \left[ 1 + \mu h^2 (2-\gamma) \right]^{K-1} } \le \frac{\gamma\left( f(x_1) - f^* \right)}{ \left[ 1 + \mu h^2 (2-\gamma) \right]^{K-1} }.$$
We conclude by recalling that $f(y_{k+1}) - f^* \le \frac{E_k}{\gamma}$.

To prove that $f(y_{k+1}) - f^* \le o\left( k^{-2\alpha} \right)$ as $k\to\infty$, we use Proposition \ref{P:Delta_Ek} (recall that $\omega=0$) to show that
\begin{align*}
E_{k+1} - E_k \le &
    -\frac{2\alpha}{k+1}E_{k+1} +\frac{2\alpha\gamma}{k+1}\psi_{k+1} -\gamma h^2 \left(1-\frac{\gamma}{2}\right)\| \nabla f(x_{k+1}) \|^2.
\end{align*}
Using the Polyak-\L ojasiewicz inequality and the fact that $\psi_{k+1}\le f(x_{k+1})-f^*$, we obtain
$$\left(1+\frac{2\alpha}{k+1}\right)E_{k+1} - E_k \le
    \left(\frac{2\alpha\gamma}{k+1}-\mu\gamma h^2 (2-\gamma)\right)\big(f(x_{k+1})-f^*\big).$$
Since we are concerned with the behavior as $k\to\infty$, we can assume, without loss of generality, that $\frac{2\alpha}{k+1}\le \frac{1}{2}\mu h^2(2-\gamma)$, so that
$$\left(1+\frac{2\alpha}{k+1}\right)E_{k+1} - E_k 
+ \mu\gamma h^2\left(1-\frac{\gamma}{2}\right)\big(f(x_{k+1})-f^*\big)\le 0.$$
Multiplying by $(k+1+2\alpha)^{2\alpha}$, we get
$$(k+1+2\alpha)^{2\alpha}\left(1+\frac{2\alpha}{k+1}\right) E_{k+1} - (k+1+2\alpha)^{2\alpha}E_k +\delta_k \le 0,$$
where $\delta_k=\mu\gamma h^2\left(1-\frac{\gamma}{2}\right)(k+1+2\alpha)^{2\alpha}\big(f(x_{k+1})-f^*\big)$. If we show that
\begin{equation} \label{E:k+1+alpha}
(k+1+2\alpha)^{2\alpha}\left(1+\frac{2\alpha}{k+1}\right) \ge (k+2+2\alpha)^{2\alpha},
\end{equation}
it will follow that $\sum_{k=0}^\infty\delta_k<\infty$, and so $\lim_{k\to\infty}\delta_k=0$, which is the same as $f(x_{k+1})-f^*\le o(\frac{1}{k^{2\alpha}})$. To this end, define 
$$\xi(t)=\left(1+\frac{2\alpha}{t}\right)\left(\frac{t+2\alpha}{t+1+2\alpha}\right)^{2\alpha},$$
so that
\begin{align*}
\dot{\xi}(t) &= -\frac{2\alpha}{t^2}\left(\frac{t+2\alpha}{t+1+2\alpha}\right)^{2\alpha}
+\frac{2\alpha}{t(t+1+2\alpha)}\left(\frac{t+2\alpha}{t+1+2\alpha}\right)^{2\alpha} \\
& = \frac{2\alpha}{t}\left(\frac{t+2\alpha}{t+1+2\alpha}\right)^{2\alpha}\left(\frac{1}{t+1+2\alpha}-\frac{1}{t}\right).    
\end{align*}
Clearly, $\dot{\xi}(t)<0$ for every $t>0$. Since $\lim_{t\to\infty}\xi(t)=1$, we must have $\xi(t)\ge 1$ for every $t>0$. This implies that \eqref{E:k+1+alpha} holds, and completes the proof.
\end{proof}

\begin{remark}
Setting $\gamma=1$ in \eqref{Algo: AGM}, one obtains $f(y_{k+1}) - f^* \le o\left( k^{-2\alpha} \right)$, for the algorithm 
\begin{equation*}
\left\{\begin{array}{ccl}
y_{k+1} &=& x_k - h^2\nabla f(x_k) \\
x_{k+1} &=& y_{k+1} + \frac{k}{k+\alpha}\left( y_{k+1} - y_k \right),
\end{array}
\right.
\end{equation*}
where $\alpha>0$ and $0<h\le\frac{1}{\sqrt{L}}$. A similar convergence rate $o\left( k^{-2\alpha} \right)$ was derived in \cite{Liu_2024}, with $\alpha>1$ and $0<h<\frac{1}{\sqrt{L}}$, for convex functions satisfying a local error bound condition, which is equivalent to Polyak-\L ojasiewicz inequality, for convex functions. Our approach extends the result to all $\alpha>0$ and also includes the critical step size $h=\frac{1}{\sqrt{L}}$.     
\end{remark}

%%%%%%%%%%%%%%%%%%%%%%%%%%%%%%%%%%%%%%%%
%%%%%%%%%%%%%%%%%%%%%%%%%%%%%%%%%%%%%%%%
\subsection{Strong convexity}
We now turn our attention to the strongly convex case.

\begin{theorem} \label{T:grad_SC}
Let $f$ be $\mu$-strongly convex and $L$-smooth. Generate $(x_k)$ and $(y_k)$ according to \eqref{Algo: AGM} with $\alpha>0$, $h\in(0,\frac{1}{\sqrt{L}}]$ and $\gamma\in(0,2-Lh^2)$. Given $x_1$, set $v_1 = -h\nabla f(x_1)$. For every $k\ge 1$, we have
$$ f(y_{k+1}) - f^* \le \left( 1 - \rho \right)^{k-1} \left( f(x_1)-f^* \right), $$
where $\rho \ge \mu \min\left\{ \frac{\gamma h^2}{1 + 2L\gamma h^2},  \frac{h^2(2-\gamma - Lh^2)}{1+\frac{\mu}{L}} \right\}$.
\end{theorem}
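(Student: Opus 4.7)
The plan is to apply Proposition \ref{P:Delta_Ek} with $\omega = 1$, so that $E_k = \tfrac{1}{2}\|\phi_k\|^2 + \gamma(f(y_{k+1})-f^*)$ and in particular $f(y_{k+1})-f^* \le E_k/\gamma$. The choice $v_1 = -h\nabla f(x_1)$ forces $\phi_1 = 0$, and the descent-lemma remark following \eqref{E: E_k} gives $E_1 \le \gamma(f(x_1)-f^*)$. The whole task therefore reduces to proving a geometric contraction $E_{k+1} \le (1-\rho)E_k$ for every $k\ge 1$.

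The only nonnegative term in Proposition \ref{P:Delta_Ek} when $\omega = 1$ is $\gamma(f(x_{k+1})-f(y_{k+1}))$. The crude $L$-smoothness bound used in Theorem \ref{Thm: algo_PL} is too weak for a linear rate; instead, I would invoke $\mu$-strong convexity at $x_{k+1}$:
\[
f(x_{k+1})-f(y_{k+1}) \le \langle \nabla f(x_{k+1}), x_{k+1}-y_{k+1}\rangle - \tfrac{\mu}{2}\|x_{k+1}-y_{k+1}\|^2.
\]
The identities $x_{k+1}-y_{k+1} = h\phi_k$ (from $y_{k+1} = x_k - h^2\nabla f(x_k)$ and $x_{k+1} = x_k + hv_k$) and $\phi_k = \tfrac{k+\alpha+1}{k+1}\phi_{k+1} + \gamma h\nabla f(x_{k+1})$ (from \eqref{E: iterates}) turn the inner product into $\gamma h\tfrac{k+\alpha+1}{k+1}\langle\nabla f(x_{k+1}),\phi_{k+1}\rangle + \gamma^2 h^2\|\nabla f(x_{k+1})\|^2$. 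Expanding the cross term of Proposition \ref{P:Delta_Ek} via $v_{k+1} = \phi_{k+1} - h\nabla f(x_{k+1})$ reveals a fortuitous exact cancellation of the $\langle\nabla f(x_{k+1}),\phi_{k+1}\rangle$ pieces. After collecting $\|\nabla f(x_{k+1})\|^2$ coefficients, the hypothesis $\gamma\in(0,2-Lh^2)$ makes every remaining coefficient strictly negative:
\[
E_{k+1}-E_k \le -\tfrac{\alpha}{k+1}\bigl(1+\tfrac{\alpha/2}{k+1}\bigr)\|\phi_{k+1}\|^2 - \tfrac{\gamma h^2(2-\gamma-Lh^2)}{2}\|\nabla f(x_{k+1})\|^2 - \tfrac{\gamma\mu h^2}{2}\|\phi_k\|^2.
\]

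To extract the contraction, I would discard the nonnegative $\alpha/(k+1)$ piece, apply Young's inequality in the form $\|\phi_k\|^2 \ge (1-\lambda)\|\phi_{k+1}\|^2 - \tfrac{1-\lambda}{\lambda}\gamma^2 h^2\|\nabla f(x_{k+1})\|^2$ for some $\lambda \in (0,1)$, and convert the gradient term via the PL consequence of strong convexity, $\|\nabla f(x_{k+1})\|^2 \ge 2\mu(f(y_{k+2})-f^*) = (2\mu/\gamma)\bigl(E_{k+1}-\tfrac{1}{2}\|\phi_{k+1}\|^2\bigr)$. Rearranging produces
\[
E_k - E_{k+1} \ge \bigl[\tfrac{\gamma\mu h^2(1-\lambda)}{2} - \tfrac{\mu M(\lambda)}{\gamma}\bigr]\|\phi_{k+1}\|^2 + \tfrac{2\mu M(\lambda)}{\gamma}E_{k+1},
\]
where $M(\lambda) := \tfrac{\gamma h^2}{2}\bigl[(2-\gamma-Lh^2) - \tfrac{(1-\lambda)\gamma^2\mu h^2}{\lambda}\bigr]$. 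Choosing $\lambda$ so that the $\|\phi_{k+1}\|^2$ bracket is nonnegative delivers $E_{k+1} \le E_k/(1+2\mu M(\lambda)/\gamma)$, from which the result follows by iteration and the bound on $E_1$.

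The main obstacle is the closing arithmetic: verifying that the rate obtained from the optimal $\lambda$ (root of a quadratic in $\lambda$) dominates $\mu\min\bigl\{\gamma h^2/(1+2L\gamma h^2),\,h^2(2-\gamma-Lh^2)/(1+\mu/L)\bigr\}$. Rather than solving the quadratic exactly, I expect the min to come from two distinguished suboptimal calibrations of $\lambda$: one in which the Young penalty on $\|\nabla f(x_{k+1})\|^2$ absorbs a controlled fraction of $(2-\gamma-Lh^2)/2$, producing the first arm after invoking $\mu \le L$ to bound the cross contribution; and one in which $\lambda$ is pushed toward the equality threshold so that $M(\lambda) \approx \gamma h^2(2-\gamma-Lh^2)/2$, producing the second arm after invoking $h^2 \le 1/L$ to compare the Young residual with $(1+\mu/L)$. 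Taking the larger of the two resulting rates yields the claim.
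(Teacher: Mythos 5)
Your derivation up to the decrease inequality is exactly the paper's: with $\omega=1$ you use strong convexity to bound $f(x_{k+1})-f(y_{k+1})$, exploit $x_{k+1}-y_{k+1}=h\phi_k$ together with $\phi_k=\tfrac{k+\alpha+1}{k+1}\phi_{k+1}+\gamma h\nabla f(x_{k+1})$, and the cross terms cancel, leaving
\[
E_{k+1}-E_k\le -\tfrac{\gamma\mu h^2}{2}\|\phi_k\|^2-\tfrac{\gamma h^2(2-\gamma-Lh^2)}{2}\|\nabla f(x_{k+1})\|^2,
\]
which is the paper's key estimate. The gap is in how you close. The paper does \emph{not} try to bound $E_{k+1}$ from above; it bounds $E_k$ from above by precisely the two quantities appearing on the right-hand side. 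Writing $f(y_{k+1})-f^*=\big(f(y_{k+1})-f(x_{k+1})\big)+\big(f(x_{k+1})-f^*\big)$, using $L$-smoothness on the first difference, $f(x_{k+1})-f^*\le\tfrac{1}{2\mu}\|\nabla f(x_{k+1})\|^2$ on the second, and Young's inequality with a free parameter $m>0$, one gets
\[
E_k\le \tfrac{1+\gamma h^2(L+m)}{2}\|\phi_k\|^2+\tfrac{\gamma}{2}\bigl(\tfrac{1}{m}+\tfrac{1}{\mu}\bigr)\|\nabla f(x_{k+1})\|^2,
\]
and comparing coefficient by coefficient gives $E_{k+1}-E_k\le-\rho E_k$ with the stated $\rho$ after setting $m=L$. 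No index shift, no quadratic in $\lambda$, and the constant drops out immediately.

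Your alternative closing — converting $\|\phi_k\|^2$ into $\|\phi_{k+1}\|^2$ by Young and $\|\nabla f(x_{k+1})\|^2$ into $E_{k+1}$ by the P\L{} consequence of strong convexity — is not obviously wrong, but it is incomplete where it matters: you must (i) exhibit a $\lambda$ for which the $\|\phi_{k+1}\|^2$ bracket is nonnegative \emph{and} $M(\lambda)$ is large enough, and (ii) verify that $\tfrac{\rho'}{1+\rho'}$ with $\rho'=2\mu M(\lambda)/\gamma$ dominates the specific $\rho$ in the statement (note the statement is of the form $(1-\rho)^{k-1}$, so a $(1+\rho')^{-1}$ contraction needs this extra comparison). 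You acknowledge this is "the main obstacle" and only conjecture that two calibrations of $\lambda$ produce the two arms of the min. In regimes where $2-\gamma-Lh^2$ is small, the bracket constraint forces $\lambda$ close to $1$ and $M(\lambda)$ collapses, so whether your route recovers the exact constant $\mu\min\bigl\{\tfrac{\gamma h^2}{1+2L\gamma h^2},\tfrac{h^2(2-\gamma-Lh^2)}{1+\mu/L}\bigr\}$ (which is tailored to the paper's $m=L$ choice) is genuinely unclear and would require the arithmetic you have deferred. As written, the proof of the stated bound is not complete.
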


\begin{proof}
Setting $\omega=1$ in Proposition \ref{P:Delta_Ek}, we get
\begin{align} \label{E:G_SC_1}
E_{k+1} - E_k 
\le & \   \gamma \left(f(x_{k+1}) - f(y_{k+1})  \right)   -\gamma h\left(\frac{k+\alpha+1}{k+1}\right) \langle \nabla f(x_{k+1}), v_{k+1} \rangle  \nonumber \\
&  -\gamma h^2 \left(2+\frac{\gamma}{2}+\frac{\alpha}{k+1}-\frac{L h^2}{2}\right)\| \nabla f(x_{k+1}) \|^2  .
\end{align}
Recalling that $y_{k+1}=x_k-h^2\nabla f(x_k)$ and $x_{k+1}=x_k+hv_k$, and using the strong convexity of $f$, we deduce that
\begin{align} \label{E:G_SC_2}
f(x_{k+1}) - f(y_{k+1}) & \le \langle \nabla f(x_{k+1}),x_{k+1}-y_{k+1}\rangle-\frac{\mu}{2}\|x_{k+1}-y_{k+1}\|^2  \nonumber \\
& = h\langle \nabla f(x_{k+1}), v_k + h\nabla f(x_k) \rangle - \frac{\mu h^2}{2}\| v_k + h\nabla f(x_k) \|^2.
\end{align}
From \eqref{E: iterates}, we have
$$ v_k + h \nabla f(x_k) = \frac{k+\alpha+1}{k+1} v_{k+1} + \left( \gamma + 1 + \frac{\alpha}{k+1} \right) h\nabla f(x_{k+1}).$$
Combining this with \eqref{E:G_SC_2}, we obtain
\begin{align*} %\label{E:G_SC_3}
f(x_{k+1}) - f(y_{k+1})  
&\le  h\left(\frac{k+\alpha+1}{k+1}\right)\langle \nabla f(x_{k+1}), v_{k+1} \rangle - \frac{\mu h^2}{2}\| v_k + h\nabla f(x_k) \|^2\nonumber \\
&\quad +h^2\left( \gamma + 1 + \frac{\alpha}{k+1} \right)\|\nabla f(x_{k+1})\|^2 .
\end{align*}
Substituting in \eqref{E:G_SC_1}, we deduce that
\begin{equation} \label{E:G_SC_4}
E_{k+1} - E_k \le   - \frac{\mu\gamma h^2}{2}\| v_k + h\nabla f(x_k) \|^2   -\gamma h^2 \left(1-\frac{\gamma}{2}-\frac{L h^2}{2}\right)\| \nabla f(x_{k+1}) \|^2.
\end{equation}
The strong convexity of $f$ implies that $f(x_{k+1}) - f^* \le \frac{1}{2\mu}\| \nabla f(x_{k+1}) \|^2$. Using this, together with the $L$-smoothness of $f$, we obtain
\begin{align*}
f(y_{k+1}) - f^*
&= f(y_{k+1}) - f(x_{k+1}) + f(x_{k+1}) - f^* \\
&\le \langle \nabla f(x_{k+1}), y_{k+1} - x_{k+1} \rangle + \frac{L}{2}\| y_{k+1} - x_{k+1} \|^2 + \frac{1}{2\mu}\| \nabla f(x_{k+1}) \|^2.
\end{align*}
For every $\zeta,\xi\in H$ and $m >0$, we have $\langle \zeta,\xi\rangle \le \frac{1}{2m}\| \zeta \|^2 + \frac{m}{2}\|\xi \|^2$. Therefore, 
$$f(y_{k+1}) - f^* \le \left( \frac{1}{2m} + \frac{1}{2\mu} \right)\| \nabla f(x_{k+1}) \|^2 +  \frac{h^2(L+m)}{2} \| v_k + h\nabla f(x_k) \|^2,$$
for every $m>0$. Since $E_k = \frac{1}{2}\| v_k + h\nabla f(x_k) \|^2 + \gamma\left( f(y_{k+1}) - f^* \right)$, we obtain
\begin{equation}\label{E: E_bound_SC}
E_k\le \frac{1 + \gamma h^2(L+m)}{2} \| v_k + h\nabla f(x_k) \|^2 
     + \frac{\gamma}{2} \left( \frac{1}{m} + \frac{1}{\mu} \right)\| \nabla f(x_{k+1}) \|^2.
\end{equation}
Comparing \eqref{E:G_SC_4} and \eqref{E: E_bound_SC}, we deduce that
$$E_{k+1} - E_k 
\le -\rho E_k,   
$$
where 
$$\rho:= \max_{m>0}\min\left\{ \frac{\gamma\mu h^2}{1 + (L+m)\gamma h^2},  \frac{(2-\gamma - Lh^2) h^2}{ 1/m + 1/\mu } \right\}.$$
Bounding with $m=L$, we obtain the lower bound for $\rho$.\end{proof}

\begin{corollary}
Let $f$ be $\mu$-strongly convex and $L$-smooth. Let $(x_k)$ and $(y_k)$ be generated by
$$\left\{\begin{array}{ccl}
y_{k+1} &=& x_k - \frac{1}{2L}\nabla f(x_k) \\
x_{k+1} &=& y_{k+1} + \frac{k}{k+\alpha}\left( y_{k+1} - y_k \right),
\end{array}
\right.$$
where $\alpha>0$. Given $x_1$, set $v_1 = -\frac{1}{\sqrt{2L}}\nabla f(x_1)$. For every $k\ge 1$, we have
$$ f(y_{k+1}) - f^* \le \left( 1 - \frac{\mu}{4(L+\mu)} \right)^{k-1} \left( f(x_1)-f^* \right).$$
\end{corollary}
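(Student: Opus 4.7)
The plan is to derive the corollary as an immediate specialization of Theorem \ref{T:grad_SC}. The algorithm in the statement is precisely \eqref{Algo: AGM} with $\gamma=1$ (which erases the term $\frac{k}{k+\alpha}(\gamma-1)(y_{k+1}-x_k)$) and $h=\frac{1}{\sqrt{2L}}$ (so that $h^2=\frac{1}{2L}$). Likewise, the prescribed initial velocity $v_1=-\frac{1}{\sqrt{2L}}\nabla f(x_1)$ is exactly $v_1=-h\nabla f(x_1)$.

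Next I would verify the hypotheses of Theorem \ref{T:grad_SC}. Clearly $\alpha>0$, and $h=\frac{1}{\sqrt{2L}}\in(0,\frac{1}{\sqrt{L}}]$. Since $Lh^2=\frac{1}{2}$, the admissible range for $\gamma$ is $(0,2-Lh^2)=(0,\tfrac{3}{2})$, which contains $\gamma=1$.

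The core computation is then to bound the contraction factor $\rho$ from below. Substituting $\gamma=1$ and $h^2=\frac{1}{2L}$ into the expression provided by Theorem \ref{T:grad_SC}, the two candidate quantities become
\[
\frac{\gamma h^{2}}{1+2L\gamma h^{2}}=\frac{1/(2L)}{2}=\frac{1}{4L},
\qquad
\frac{h^{2}(2-\gamma-Lh^{2})}{1+\mu/L}=\frac{(1/(2L))(1/2)}{(L+\mu)/L}=\frac{1}{4(L+\mu)}.
\]
Since $L+\mu\ge L$, the minimum of these two is $\frac{1}{4(L+\mu)}$, giving $\rho\ge\frac{\mu}{4(L+\mu)}$. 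Plugging this into the conclusion of Theorem \ref{T:grad_SC} yields the desired inequality $f(y_{k+1})-f^*\le\bigl(1-\tfrac{\mu}{4(L+\mu)}\bigr)^{k-1}(f(x_1)-f^*)$.

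There is no real obstacle here; the only ``choice'' being made is the step size $h=\frac{1}{\sqrt{2L}}$, which is what balances the two terms inside the minimum defining $\rho$ and thus maximizes the lower bound one obtains by the argument of Theorem \ref{T:grad_SC} at $\gamma=1$. Any other compatible step size would give a smaller (or at best equal) guaranteed rate, so the stated $h$ is the natural one to pick for this corollary.
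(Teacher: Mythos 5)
Your proof is correct and takes essentially the same route as the paper: both specialize Theorem \ref{T:grad_SC} to $\gamma=1$ and $h^2=\tfrac{1}{2L}$, evaluate the two terms in the lower bound for $\rho$, and identify the minimum as $\tfrac{1}{4(L+\mu)}$. The only cosmetic difference is that the paper re-derives the passage from $E_{k+1}\le(1-\rho)E_k$ to the function-value bound via $f(y_{k+1})-f^*\le E_k$ and $E_1\le f(x_1)-f^*$, which you correctly absorb into the citation of the theorem's stated conclusion.
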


\begin{proof}
We use Theorem \ref{T:grad_SC} with $\gamma=1$ and $h^2 = \frac{1}{2L}$, to obtain
$$E_{k+1}\le(1-\rho)E_k,\qqbox{where}\rho \ge \frac{\mu}{2L}\min\left\{ \frac{1}{2},  \frac{1}{2( 1 + \frac{\mu}{L}) }  \right\}=\frac{\mu}{4(L+\mu)} .$$
Since $f(y_{k+1}) - f^* \le E_k$ for all $k\ge 1$, and $E_1=f(y_2)-f^*\le f(x_1) - f^*$, the result follows.
\end{proof}

\begin{remark}
A faster convergence rate of approximately $\mathcal{O}\left( \frac{1}{k^2}\left( 1 - \frac{\mu}{4L} \right)^k \right)$ has been established in \cite{Shi_2024,Bao_2023} for $\alpha\ge 3$.  
\end{remark}

%%%%%%%%%%%%%%%%%%%%%%%%%%%%%%%%%%%%%%%%
%%%%%%%%%%%%%%%%%%%%%%%%%%%%%%%%%%%%%%%%
\section{Convergence rates III: proximal algorithms}\label{Sec: proximal_algorithms}
In this section, we consider the convex composite optimization problem:
\begin{equation}\label{Problem: P_prox}
\min_{x\in H} F(x) = f(x) + g(x),
\end{equation}
where $f:H\to\mathbb{R}$ is convex and $L$-smooth; $g:H\to\mathbb{R}\cup\{\infty\}$ is convex, proper and lower semicontinuous. We assume that the solution set of problem \eqref{Problem: P_prox} is nonempty, and denote $F^* = \min(F)$.

To solve problem \eqref{Problem: P_prox}, one can employ the accelerated proximal-gradient method, also known as FISTA \cite{Beck_2009}, namely:
\begin{equation}\label{Algo: APM}\tag{APM}
\left\{
\begin{array}{rcl}
y_{k+1} &=& x_k - sG_s(x_k) \\
x_{k+1} &=& y_{k+1} + \frac{k}{k+\alpha}( y_{k+1} - y_k ),
\end{array}
\right.
\end{equation}
where $0< s \le \frac{1}{L}$ and the operator $G_s$ is given by
$$G_s(x): = \tfrac{x-\prox_{sg}(x-s\nabla f(x))}{s},\ \text{where, } \prox_{sg}(z) := \argmin_{u}\left\{ g(u) + \frac{1}{2s}\| u - z \|^2  \right\}.  $$
It follows from the definitions above that 
\begin{equation}\label{E: iterates_prox}
x_{k+1} - y_{k+1} = \frac{k}{k+\alpha}\big(x_k - y_k - sG_s(x_k)\big).
\end{equation}

The following auxiliary results will be useful in what follows. The first one is \cite[Lemma 7]{Bao_2023}, namely:

\begin{lemma}\label{Lem: prox_bound_SC}
Let $f:H\to\mathbb{R}$ be $\mu$-strongly convex and $L$-smooth, let $g:H\to\mathbb{R}\cup\{\infty\}$ be proper, convex and lower-semicontinuous, and set $F=f+g$. For every $x,y\in H$, we have
$$F\left(x-sG_s(x)\right) \le F(y) + \left\langle G_s(x), x-y \right\rangle - \frac{s(2-sL)}{2} \| G_s(x) \|^2 - \frac{\mu}{2}\| x-y \|^2.$$
If $f$ is merely convex, the inequality holds with $\mu=0$.
\end{lemma}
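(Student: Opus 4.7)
The plan is to exploit the characterization $x^+ := x - sG_s(x) = \prox_{sg}(x - s\nabla f(x))$ via the first-order optimality condition of the proximal minimization. Writing out that optimality condition yields the subgradient inclusion $G_s(x) - \nabla f(x) \in \partial g(x^+)$, which is the crucial object that lets us pass from $g(x^+)$ to $g(y)$ for an arbitrary comparison point $y$. Once this inclusion is in hand, the lemma is essentially a bookkeeping exercise built from the three standard pillars: the descent lemma for $f$, the strong convexity inequality for $f$, and the subgradient inequality for $g$.

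Specifically, I would proceed in three steps. First, apply the $L$-smoothness descent lemma to $f(x^+)$, using $x^+ - x = -sG_s(x)$, to get $f(x^+) \le f(x) - s\langle \nabla f(x), G_s(x)\rangle + \frac{Ls^2}{2}\|G_s(x)\|^2$. Second, apply $\mu$-strong convexity in the form $f(x) \le f(y) + \langle \nabla f(x), x - y\rangle - \frac{\mu}{2}\|x - y\|^2$ (the merely convex case corresponds to dropping the last term, i.e.\ taking $\mu = 0$). Third, use the subgradient inequality for the convex function $g$ with the inclusion above to write $g(x^+) \le g(y) + \langle G_s(x) - \nabla f(x), x^+ - y\rangle$.

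Summing the three inequalities and expanding $x^+ - y = (x - y) - sG_s(x)$ in the last inner product, one observes that both pairs of $\nabla f(x)$-cross-terms cancel exactly: the $\langle \nabla f(x), x - y\rangle$ terms cancel between the strong-convexity and subgradient estimates, and the $s\langle \nabla f(x), G_s(x)\rangle$ terms cancel between the descent lemma and subgradient estimates. What remains on the $\|G_s(x)\|^2$ side is $\frac{Ls^2}{2}\|G_s(x)\|^2 - s\|G_s(x)\|^2 = -\frac{s(2-sL)}{2}\|G_s(x)\|^2$, which is the coefficient claimed in the statement, while the linear term collapses to $\langle G_s(x), x - y\rangle$ and the quadratic term $-\frac{\mu}{2}\|x-y\|^2$ is inherited from strong convexity.

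The only real obstacle is the bookkeeping: one must be careful that every $\nabla f(x)$-inner-product cancels and that the coefficient on $\|G_s(x)\|^2$ comes out with the right sign, since a sign error here would destroy the sufficient-decrease interpretation on which all subsequent proximal-type convergence analyses rely. The convex-only version is automatic, as strong convexity was only used to generate the $-\frac{\mu}{2}\|x-y\|^2$ term.
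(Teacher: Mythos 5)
Your proof is correct and is the standard argument for this sufficient-decrease lemma (the paper itself does not prove it but cites \cite[Lemma 7]{Bao_2023}, whose proof follows exactly this route): the prox optimality condition gives $G_s(x)-\nabla f(x)\in\partial g\left(x-sG_s(x)\right)$, and combining the descent lemma, the (strong) convexity inequality for $f$, and the subgradient inequality for $g$ yields the claim after the cancellations you describe. The bookkeeping checks out, including the coefficient $-\frac{s(2-sL)}{2}$ and the $\mu=0$ specialization.
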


\begin{lemma} \label{L:PL_prox}
Let $f:H\to\mathbb{R}$ be convex and $L$-smooth, and let $g:H\to\mathbb{R}\cup\{\infty\}$ be proper, convex and lower-semicontinuous. Let the function $F=f+g$ satisfy the Polyak-\L ojasiewicz inequality with constant $\mu>0$. For every $x\in H$, we have
$$ \| G_s(x) \|^2 \ge 2\mu \left( F\left(x-sG_s(x)\right) - F^* \right). $$ 
\end{lemma}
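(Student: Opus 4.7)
The plan is to read the Polyak-\L ojasiewicz hypothesis on $F=f+g$ in the natural way adapted to the composite setting, treating the proximal-gradient map $G_s$ as the surrogate for $\nabla F$ in \eqref{E: PL}; that is,
$$\|G_s(x)\|^2 \ge 2\mu\big(F(x) - F^*\big) \qquad \text{for every } x\in H.$$
Under this reading, the conclusion of the lemma is just this inequality further weakened by one step of proximal-gradient descent.

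Concretely, I would first apply Lemma \ref{Lem: prox_bound_SC} with $y=x$ and with the strong-convexity constant set to $0$ (since in the present hypothesis $f$ is only assumed convex). This yields
$$F\big(x - sG_s(x)\big) \le F(x) - \frac{s(2 - sL)}{2}\|G_s(x)\|^2.$$
The restriction $0 < s \le 1/L$ makes $s(2 - sL)\ge s>0$, so this already provides the one-step descent $F(x - sG_s(x)) \le F(x)$, and consequently $F(x - sG_s(x)) - F^* \le F(x) - F^*$. Chaining with the PL hypothesis then yields
$$\|G_s(x)\|^2 \ge 2\mu\big(F(x) - F^*\big) \ge 2\mu\big(F(x - sG_s(x)) - F^*\big),$$
which is the claim.

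The main obstacle is essentially a matter of interpretation: fixing precisely what the Polyak-\L ojasiewicz inequality should mean when the objective fails to be differentiable. Once the $G_s$-based formulation, the natural proximal analogue of \eqref{E: PL}, is accepted as the working hypothesis, no delicate estimates are required; the argument reduces to combining the PL inequality at $x$ with the descent bound already recorded in Lemma \ref{Lem: prox_bound_SC}, with the nonnegative factor $s(2-sL)/2\,\|G_s(x)\|^2$ simply discarded in the final step.
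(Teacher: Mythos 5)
There is a genuine gap here, and it lies exactly where you located the ``main obstacle'': the interpretation of the Polyak--\L ojasiewicz hypothesis for the nonsmooth function $F$. The reading intended by the paper (and the standard one for nonsmooth functions) is in terms of subgradients: $\|\xi\|^2\ge 2\mu\,(F(y)-F^*)$ for $\xi\in\partial F(y)$ (equivalently, $\operatorname{dist}(0,\partial F(y))^2\ge 2\mu\,(F(y)-F^*)$). The whole point of Lemma~\ref{L:PL_prox} is to \emph{convert} that subgradient inequality into an inequality for the gradient mapping $G_s$, which is the quantity that actually appears in the energy estimates of Proposition~\ref{P: E_k_bound_prox}. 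By taking $\|G_s(x)\|^2\ge 2\mu\,(F(x)-F^*)$ as the hypothesis, you have essentially assumed the conclusion: what remains is only the one-step descent $F(x-sG_s(x))\le F(x)$, which is why your argument feels like it requires ``no delicate estimates.'' It proves a different (and nearly vacuous) statement.

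The missing idea is the following. Write $y=x-sG_s(x)$, so that $y=\prox_{sg}(x-s\nabla f(x))$ and the optimality condition for the prox gives $G_s(x)-\nabla f(x)\in\partial g(y)$; hence
$$\xi:=G_s(x)+\nabla f(y)-\nabla f(x)\in\nabla f(y)+\partial g(y)=\partial F(y).$$
One then shows $\|\xi\|\le\|G_s(x)\|$. Expanding the square and using $y-x=-sG_s(x)$,
$$\|\xi\|^2=\|G_s(x)\|^2+\|\nabla f(y)-\nabla f(x)\|^2-\tfrac{2}{s}\langle y-x,\nabla f(y)-\nabla f(x)\rangle,$$
and the cocoercivity of $\nabla f$ (Baillon--Haddad: $\langle y-x,\nabla f(y)-\nabla f(x)\rangle\ge\tfrac1L\|\nabla f(y)-\nabla f(x)\|^2$) together with $sL\le1$ gives $\|\xi\|^2\le\|G_s(x)\|^2$. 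Applying the subgradient PL inequality at $y$ with this particular $\xi$ yields $\|G_s(x)\|^2\ge\|\xi\|^2\ge 2\mu\,(F(y)-F^*)$, which is the claim. None of this appears in your proposal, so the argument as written does not establish the lemma under its intended hypothesis.
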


\begin{proof}
Writing $y:=x-sG_s(x)$, we obtain $y = \prox_{sg}(x-s\nabla f(x))$, and then
$$ G_s(x) \in \nabla f(x) + \partial g(y). $$
Denote 
$$\xi := G_s(x) + \nabla f(y) - \nabla f(x)\in\partial F(y).$$
Since $f$ is convex and $L$-smooth, we can deduce
\begin{align*}
\| \xi \|^2
&= \| G_s(x) \|^2 + \| \nabla f(y) - \nabla f(x) \|^2 + 2 \langle G_s(x), \nabla f(y) - \nabla f(x)\rangle \\
&= \| G_s(x) \|^2 + \| \nabla f(y) - \nabla f(x) \|^2 - \frac{2}{s} \langle y-x, \nabla f(y) - \nabla f(x)\rangle \\
&\le \| G_s(x) \|^2 + \frac{sL-2}{sL}\| \nabla f(y) - \nabla f(x) \|^2 \\
&\le \| G_s(x) \|^2,
\end{align*}
because $sL\le 1$. By the Polyak-\L ojasiewicz inequality, we conclude that
$$  \| G_s(x) \|^2 \ge\| \xi \|^2 \ge 2\mu \left( F(y) - F^* \right), $$
as desired.
\end{proof}

%%%%%%%%%%%%%%%%%%%%%%%%%%%%%%%%%%%%%%%%
%%%%%%%%%%%%%%%%%%%%%%%%%%%%%%%%%%%%%%%%
\subsection{General estimations}
In this subsection, we present some useful estimations before we proceed to prove the convergence rate of the algorithm \eqref{Algo: APM}. Our convergence proofs center around the following energy-like sequence $(E_k)_{k\ge 1}$, which is defined by
\begin{equation}\label{E: E_k_prox}
E_k = \frac{1}{2}\| \phi_k \|^2 + s\left( F(y_k) - F^* \right),
\end{equation} 
where $\phi_k:= x_k - y_k$. We have the following:

\begin{proposition}\label{P: E_k_bound_prox}
Let $f$ be convex and $L$-smooth. Let $0 < s \le \frac{1}{L}$. Let $(x_k)_{k\ge 1}$ and $(y_k)_{k\ge 1}$ be generated according to \eqref{Algo: APM}, and consider the sequence $(E_k)_{k\ge 1}$ defined by \eqref{E: E_k_prox}. Then, 
\begin{align*}
E_{k+1} - E_k
    =& - \frac{\alpha}{k}\left(1+\frac{\alpha}{2k}\right)\| x_{k+1} - x_{k+1} \|^2
   - s\langle G_s(x_k), x_k - y_k \rangle  \\
   &\ + \frac{s^2}{2} \| G_s(x_k) \|^2 + s\left( F(y_{k+1}) - F(y_k) \right).
\end{align*}
\end{proposition}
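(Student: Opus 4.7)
The plan is to proceed by direct computation, mirroring the structure of Proposition \ref{P:Delta_Ek}. Writing $\phi_k := x_k - y_k$, the definition of $E_k$ splits the increment into an inertial part and a function-value part,
\begin{equation*}
E_{k+1} - E_k = \tfrac{1}{2}\|\phi_{k+1}\|^2 - \tfrac{1}{2}\|\phi_k\|^2 + s\bigl(F(y_{k+1}) - F(y_k)\bigr),
\end{equation*}
so only the first two terms require real work; the last already appears verbatim in the desired identity.

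My first step is to turn the iteration into a clean relation between $\phi_{k+1}$, $\phi_k$ and $G_s(x_k)$. From \eqref{E: iterates_prox} and the identity $x_k - y_{k+1} = sG_s(x_k)$, I obtain
\begin{equation*}
\phi_{k+1} = x_{k+1} - y_{k+1} = \tfrac{k}{k+\alpha}\bigl(\phi_k - sG_s(x_k)\bigr),
\end{equation*}
which I rearrange into $\phi_k = \tfrac{k+\alpha}{k}\phi_{k+1} + sG_s(x_k)$. The advantage of writing $\phi_k$ in terms of $\phi_{k+1}$ is that squaring produces the coefficient $\tfrac{(k+\alpha)^2 - k^2}{k^2} = \tfrac{\alpha(2k+\alpha)}{k^2} = \tfrac{2\alpha}{k}\bigl(1 + \tfrac{\alpha}{2k}\bigr)$, which is exactly the factor appearing in the statement.

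Next I would expand $\|\phi_k\|^2 = \tfrac{(k+\alpha)^2}{k^2}\|\phi_{k+1}\|^2 + \tfrac{2s(k+\alpha)}{k}\langle \phi_{k+1}, G_s(x_k)\rangle + s^2\|G_s(x_k)\|^2$ and subtract from $\|\phi_{k+1}\|^2$, yielding
\begin{equation*}
\tfrac{1}{2}\|\phi_{k+1}\|^2 - \tfrac{1}{2}\|\phi_k\|^2 = -\tfrac{\alpha}{k}\Bigl(1 + \tfrac{\alpha}{2k}\Bigr)\|\phi_{k+1}\|^2 - \tfrac{s(k+\alpha)}{k}\langle \phi_{k+1}, G_s(x_k)\rangle - \tfrac{s^2}{2}\|G_s(x_k)\|^2.
\end{equation*}
The inner product still involves $\phi_{k+1}$, not $\phi_k = x_k - y_k$ as required. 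I would eliminate this by reusing the recurrence: $\langle \phi_{k+1}, G_s(x_k)\rangle = \tfrac{k}{k+\alpha}\langle \phi_k, G_s(x_k)\rangle - \tfrac{ks}{k+\alpha}\|G_s(x_k)\|^2$. Substituting this makes the coefficient $\tfrac{s(k+\alpha)}{k}$ cancel nicely, turning the inner product into $-s\langle G_s(x_k), x_k - y_k\rangle$ and shifting the $\|G_s(x_k)\|^2$ coefficient from $-\tfrac{s^2}{2}$ to $+\tfrac{s^2}{2}$.

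Adding back the $s(F(y_{k+1}) - F(y_k))$ term then gives the stated identity. The main obstacle is really bookkeeping: two different representations of $\phi_{k+1}$ must be used in sequence (first to create the $\|\phi_{k+1}\|^2$ term with the right coefficient, then to convert the residual inner product back to $x_k - y_k$) and the signs on $\|G_s(x_k)\|^2$ must be tracked carefully through the two substitutions. There are no additional analytic ingredients beyond the algebraic identity $\|a\|^2 - \|b\|^2 = 2\langle a, a-b\rangle - \|a-b\|^2$ and the explicit expression of the iteration; no use of convexity, $L$-smoothness, or the prox-operator's properties is needed at this stage, which explains why the hypotheses in the statement are only the minimal ones required to make the iteration well-defined.
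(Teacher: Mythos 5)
Your proof is correct and follows essentially the same route as the paper's: both are direct algebraic expansions of $\tfrac12\|\phi_{k+1}\|^2-\tfrac12\|\phi_k\|^2$ using the recurrence \eqref{E: iterates_prox}, differing only in whether one first works in terms of $\phi_k$ and then reassembles $\|\phi_k-sG_s(x_k)\|^2=\tfrac{(k+\alpha)^2}{k^2}\|\phi_{k+1}\|^2$ (the paper) or solves for $\phi_k$ in terms of $\phi_{k+1}$ from the outset (you). You also correctly read the statement's $\|x_{k+1}-x_{k+1}\|^2$ as a typo for $\|x_{k+1}-y_{k+1}\|^2=\|\phi_{k+1}\|^2$.
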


\begin{proof}
By definition,
$$E_{k+1} - E_k = \left( \frac{1}{2}\| \phi_{k+1} \|^2 - \frac{1}{2}\| \phi_k \|^2 \right) + s\left( F(y_{k+1}) - F(y_k) \right).  $$
From \eqref{E: iterates_prox}, we obtain
$$ \phi_{k+1} - \phi_k = - \frac{\alpha}{k+\alpha}(x_k-y_k) - \frac{k s}{k+\alpha} G_s(x_k), $$
so that
\begin{align*}
\| \phi_{k+1} - \phi_k \|^2 
&= \left( \frac{\alpha}{k+\alpha} \right)^2 \| x_k - y_k \|^2 
  + \left( \frac{k}{k+\alpha} \right)^2 s^2 \| G_s(x_k) \|^2 \\
&\quad + \frac{2\alpha k s}{(k+\alpha)^2} \langle G_s(x_k), x_k-y_k \rangle, 
\end{align*}
and
\begin{align*}
\langle \phi_k, \phi_{k+1} - \phi_k \rangle
&= \left\langle x_k - y_k, - \frac{\alpha}{k+\alpha}(x_k-y_k) - \frac{k s}{k+\alpha} G_s(x_k) \right\rangle \\
&= - \frac{\alpha}{k+\alpha} \| x_k - y_k \|^2 - \frac{k s}{k+\alpha} \langle G_s(x_k), x_k - y_k \rangle.  
\end{align*}
Since 
$$\frac{1}{2}\| \phi_{k+1} \|^2 - \frac{1}{2}\| \phi_k \|^2 = \langle \phi_k, \phi_{k+1} - \phi_k \rangle + \frac{1}{2}\| \phi_{k+1} - \phi_k \|^2,$$ 
it follows that
\begin{align*}
\frac{1}{2}\| \phi_{k+1} \|^2 - \frac{1}{2}\| \phi_k \|^2  
= & - \frac{\alpha (2k+\alpha)}{2(k+\alpha)^2} \| x_k - y_k \|^2 
- \frac{k^2 s}{(k+\alpha)^2} \langle G_s(x_k), x_k - y_k \rangle  \\
& \  + \frac{s^2}{2}\left( \frac{k}{k+\alpha} \right)^2  \| G_s(x_k) \|^2   \\
= & \ - \frac{\alpha (2k+\alpha)}{2(k+\alpha)^2} \| x_k - y_k - s G_s(x_k) \|^2
   - s\langle G_s(x_k), x_k - y_k \rangle   \\
& \  + \frac{s^2}{2} \| G_s(x_k) \|^2.
\end{align*}
Using \eqref{E: iterates_prox}, we have
$$\frac{1}{2}\| \phi_{k+1} \|^2 - \frac{1}{2}\| \phi_k \|^2 = - \frac{\alpha (2k+\alpha)}{2k^2} \| x_{k+1} - x_{k+1} \|^2  - s\langle G_s(x_k), x_k - y_k \rangle 
   + \frac{s^2}{2} \| G_s(x_k) \|^2.$$
We conclude by adding $s\left( F(y_{k+1}) - F(y_k) \right)$ on both sides.
\end{proof}

%%%%%%%%%%%%%%%%%%%%%%%%%%%%%%%%%%%%%%%%
%%%%%%%%%%%%%%%%%%%%%%%%%%%%%%%%%%%%%%%%
\subsection{Convexity and Polyak-\L ojasiewicz inequality}
We begin by establishing a convergence rate for the function values in case the objective function is convex and satisfies the Polyak-\L ojasiewicz inequality. 

\begin{theorem}
Let $F=f+g$, where $f:H\to\mathbb{R}$ is convex and $L$-smooth, and $g:H\to\mathbb{R}\cup\{\infty\}$ is proper, convex and lower-semicontinuous. Assume $F$ satisfies the Polyak-\L ojasiewicz inequality with constant $\mu>0$. Let $(x_k)_{k\ge 1}$ and $(y_k)_{k\ge 1}$ be generated according to \eqref{Algo: APM}, with
$\alpha>0$, $0 < s \le \frac{1}{L}$ and $x_1 = y_1$. For every $k\ge 1$, we have
\begin{equation*}
F(y_{k+1}) - F^*\le\left\{
\begin{array}{ll}
\frac{ F(y_1) - F^* }{ \left[ 1 + \mu s (1-sL) \right]^{k-1} },&\quad 1\le k \le K,\\ [5pt]
\frac{F(y_1) - F^*}{\left[ 1 + \mu s (1-sL) \right]^{K-1} } \left( \frac{K+\alpha}{k+\alpha} \right)^{2\alpha},&\quad k> K,
\end{array}
\right.
\end{equation*}
where $K$ is the largest integer such that $\mu s (1-sL)(K-1)^2-2\alpha(K-1)-\alpha^2\le 0$. Moreover, $F(y_{k+1}) - F^* \le o( \frac{1}{k^{2\alpha}})$ as $k\to\infty$.
\end{theorem}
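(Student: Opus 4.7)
The plan is to mirror the structure used in the proof of Theorem 3.4, replacing the gradient quantities by their proximal-gradient analogues and leveraging the three preparatory results of this section: Proposition 4.4 (the evolution identity for $E_k$), Lemma 4.1 applied with $\mu=0$ (since $f$ is merely convex), and Lemma 4.2 (the proximal PL bound).

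First, I will invoke Lemma 4.1 with $y:=y_k$ and $x:=x_k$. Since $y_{k+1}=x_k-sG_s(x_k)$, this yields
$$s\big(F(y_{k+1})-F(y_k)\big) \le s\langle G_s(x_k),\, x_k-y_k\rangle - \frac{s^2(2-sL)}{2}\|G_s(x_k)\|^2.$$
Substituting into the identity of Proposition 4.4 cancels the mixed inner-product term and leaves
$$E_{k+1}-E_k \le -\frac{\alpha}{k}\left(1+\frac{\alpha}{2k}\right)\|x_{k+1}-y_{k+1}\|^2 - \frac{s^2(1-sL)}{2}\|G_s(x_k)\|^2.$$
Applying Lemma 4.2 to the last term and recalling that $E_{k+1}=\tfrac{1}{2}\|x_{k+1}-y_{k+1}\|^2+s(F(y_{k+1})-F^*)$, I obtain the single master inequality
$$E_{k+1}-E_k \le -\min\left\{\frac{2\alpha}{k}+\frac{\alpha^2}{k^2},\ \mu s(1-sL)\right\} E_{k+1}.$$
From here the two-regime bound is extracted exactly as in Theorem 3.4: in the linear phase (small $k$, where the minimum equals $\mu s(1-sL)$), one iterates $E_{k+1}\le E_k/(1+\mu s(1-sL))$; in the polynomial phase (large $k$, where the minimum equals $(k+\alpha)^2/k^2-1$), one telescopes and estimates the resulting product via $\ln(1+x)\ge x/(1+x)$ together with the comparison $\sum_{m=K+1}^{k} 1/(m+\alpha)\ge \int_{K+1}^{k+1} dx/(x+\alpha)$, producing the factor $((K+\alpha)/(k+\alpha))^{2\alpha}$. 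The initial value $E_1=s(F(y_1)-F^*)$ follows from $x_1=y_1$; to pass from $E_k$ to $F(y_{k+1})-F^*$, I plan to use the bound $F(y_{k+1})-F^*\le E_k/s$, which follows from Lemma 4.1 combined with a Young inequality on the inner product term (the constants work out cleanly since $sL\le 1$).

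For the concluding $o(1/k^{2\alpha})$ claim, I will follow the strategy of the last part of the proof of Theorem 3.4. Restricting to $k$ large enough that $\tfrac{2\alpha}{k}\le\tfrac{1}{2}\mu s(1-sL)$, the refined estimate together with Lemma 4.2 produces an inequality of the form
$$\left(1+\frac{2\alpha}{k}\right)E_{k+1}-E_k + c\big(F(y_{k+1})-F^*\big) \le 0,$$
for some constant $c>0$ depending only on $\mu$, $s$, $L$. Multiplying by $(k+2\alpha)^{2\alpha}$ and invoking the monotonicity bound $(k+2\alpha)^{2\alpha}(1+2\alpha/k)\ge (k+1+2\alpha)^{2\alpha}$, already proved in Theorem 3.4 via the auxiliary function $\xi(t)=(1+2\alpha/t)\,((t+2\alpha)/(t+1+2\alpha))^{2\alpha}$, gives a telescoping relation whose residual $c(k+2\alpha)^{2\alpha}(F(y_{k+1})-F^*)$ is summable. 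This forces $(k+2\alpha)^{2\alpha}(F(y_{k+1})-F^*)\to 0$, which is the desired conclusion.

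The principal obstacle is making sure Lemma 4.2 supplies enough strength to play the twin roles that $\psi_{k+1}$ and $\|\nabla f(x_{k+1})\|^2$ filled in the gradient case: there one had both $\psi_{k+1}\le f(x_{k+1})-f^*$ and $\|\nabla f(x_{k+1})\|^2\ge 2\mu(f(x_{k+1})-f^*)$ at the same iterate, whereas here the natural quantity is the composite map $G_s(x_k)$, which Lemma 4.2 links only to the downstream value $F(y_{k+1})-F^*$. Once this substitution is installed correctly---checking in particular that the $\langle G_s(x_k),x_k-y_k\rangle$ contribution arising in the descent step can be absorbed into $\tfrac12\|\phi_k\|^2$ plus a multiple of $\|G_s(x_k)\|^2$ with the right signs---the remainder of the argument is a faithful transcription of the proof of Theorem 3.4.
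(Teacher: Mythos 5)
Your proposal is correct and follows essentially the same route as the paper's proof: Lemma \ref{Lem: prox_bound_SC} with $(x,y)=(x_k,y_k)$ combined with Proposition \ref{P: E_k_bound_prox} to cancel the inner product, Lemma \ref{L:PL_prox} to convert $\|G_s(x_k)\|^2$ into $F(y_{k+1})-F^*$ and hence into $E_{k+1}$, the same two-regime extraction, and the same summability argument for the $o(k^{-2\alpha})$ claim. The only cosmetic differences are that the paper obtains $F(y_{k+1})-F^*\le E_{k+1}/s$ directly from the definition of $E_{k+1}$ (no Young inequality needed) and keeps the factor $(1+\alpha/k)^2$ with weight $(k+\alpha)^{2\alpha}$ in the final step, which is equivalent to your $(1+2\alpha/k)$ variant.
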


\begin{proof}
Setting $x=x_k$ and $y=y_k$ in Lemma \ref{Lem: prox_bound_SC} gives
$$F(y_{k+1}) - F(y_k) \le \langle G_s(x_k), x_k - y_k \rangle - s\left( 1 - \frac{1}{2}sL \right) \| G_s(x_k) \|^2.$$
Combining this with Proposition \ref{P: E_k_bound_prox}, and then using Lemma \ref{L:PL_prox}, we obtain
\begin{align}
E_{k+1} - E_k
\le & - \frac{\alpha}{k}\left(1+\frac{\alpha}{2k}\right)\| x_{k+1} - x_{k+1} \|^2 - \frac{s^2(1-sL)}{2} \| G_s(x_k) \|^2 \nonumber \\
\le & - \frac{\alpha}{k}\left(1+\frac{\alpha}{2k}\right)\| x_{k+1} - x_{k+1} \|^2 - \mu s^2(1-sL)\big(F(y_{k+1})-F^*\big) \label{E:O_to_o_prox}\\
\le & -\min\left\{ \frac{2\alpha}{k}\left(1+\frac{\alpha}{2k}\right),\mu s(1-sL)\right\}E_{k+1},\nonumber 
\end{align}
by the definition of $(E_k)_{k\ge 1}$. We now proceed as in the proof of Theorem \ref{Thm: algo_PL}. If $k\le K$, then
$$E_k\le\frac{E_1}{\big(1+\mu s(1-sL)\big)^{k-1}}.$$
For $k>K$, we have
$\left(1+\frac{\alpha}{k}\right)^2E_{k+1}\le E_k$,
and so
$$E_k\le \left(\frac{K+\alpha}{k+\alpha}\right)^{2\alpha}E_K.$$
Since $F(y_k)-F^*\le \frac{E_k}{s}$ and $E_1=s\big(F(y_1)-F^*\big)$, the conclusion easily follows. For all $k$ large enough, we have $\frac{2\alpha}{k}\left(1+\frac{\alpha}{2k}\right)\le\frac{1}{2}\mu s(1-sL)$. We can use \eqref{E:O_to_o_prox} to deduce that
$$\left(1+\frac{\alpha}{k}\right)^2E_{k+1}- E_k + \frac{\mu s(1-sL)}{2}\big(F(y_{k+1})-F^*\big)\le 0.$$
Multiply this by $(k+\alpha)^{2\alpha}$ to obtain
$$(k+\alpha)^{2\alpha}\left(1+\frac{\alpha}{k}\right)^2E_{k+1}- (k+\alpha)^{2\alpha}E_k + \delta_k\le 0,$$
where $\delta_k:= \frac{\mu s(1-sL)}{2}(k+\alpha)^{2\alpha}\big(F(y_{k+1})-F^*\big)$. As in the proof of Theorem \ref{Thm: algo_PL}, we first show that $(k+\alpha)^{2\alpha}\left(1+\frac{\alpha}{k}\right)^2\ge (k+1+\alpha)^{2\alpha}$, then deduce that $\sum_{k=1}^\infty\delta_k<\infty$, and finally conclude that $\lim_{k\to\infty}\delta_k=0$.
\end{proof}

%%%%%%%%%%%%%%%%%%%%%%%%%%%%%%%%%%%%%%%%
%%%%%%%%%%%%%%%%%%%%%%%%%%%%%%%%%%%%%%%%
\subsection{Strong convexity}
In this subsection, we prove the linear convergence rate for the algorithm \eqref{Algo: APM} in case $f$ is strongly convex.

\begin{theorem}
Let $F=f+g$, where $f:H\to\mathbb{R}$ is $\mu$-strongly convex and $L$-smooth, and $g:H\to\mathbb{R}\cup\{\infty\}$ is proper, convex and lower-semicontinuous. Generate $(x_k)_{k\ge 1}$ and $(y_k)_{k\ge 1}$ following \eqref{Algo: APM}, with
$\alpha>0$, $0 < s < \frac{1}{L}$ and $x_1 = y_1$. For every $k\ge 1$, we have
$$ F(y_k) - F^* \le \frac{F(x_1) - F^*}{(1+\rho)^{k-1}}, $$
where $\rho = \mu \min\left\{\frac{s}{2}, \frac{s(1-sL)}{1+\mu Ls^2} \right\}$.
\end{theorem}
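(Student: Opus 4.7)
My plan is to follow the same Lyapunov-style strategy used for the strongly convex case of the gradient algorithm in Theorem \ref{T:grad_SC}: obtain a sharp upper bound on $E_{k+1} - E_k$, then an upper bound on $E_{k+1}$ itself in terms of the same ``decrease'' quantities, and finally deduce $(1+\rho)E_{k+1} \le E_k$ by comparing coefficients term-by-term.

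First, I would start from Proposition \ref{P: E_k_bound_prox} and apply Lemma \ref{Lem: prox_bound_SC} with $x = x_k$ and $y = y_k$, invoking the $\mu$-strong convexity of $F$ that follows from the strong convexity of $f$. This estimates $s(F(y_{k+1})-F(y_k))$ and, after the two occurrences of $s\langle G_s(x_k), x_k - y_k\rangle$ cancel and the nonpositive $-\frac{\alpha(2k+\alpha)}{2k^2}\|\phi_{k+1}\|^2$ term is discarded, yields the clean bound
\begin{equation*}
E_{k+1} - E_k \le -\frac{s^2(1-sL)}{2}\|G_s(x_k)\|^2 - \frac{\mu s}{2}\|\phi_k\|^2.
\end{equation*}

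Next, I would upper-bound $E_{k+1}$ by a positive combination of $\|\phi_k\|^2$ and $\|G_s(x_k)\|^2$. From \eqref{E: iterates_prox}, $\phi_{k+1} = \frac{k}{k+\alpha}(\phi_k - sG_s(x_k))$, so Young's inequality with parameter $1$ gives $\|\phi_{k+1}\|^2 \le 2\|\phi_k\|^2 + 2s^2\|G_s(x_k)\|^2$. For the function-value part, I would apply Lemma \ref{Lem: prox_bound_SC} with $x=x_k$ and $y=x^* \in \arg\min F$, then use the pointwise completion of squares $\langle G_s(x_k), x_k - x^*\rangle - \frac{\mu}{2}\|x_k-x^*\|^2 \le \frac{1}{2\mu}\|G_s(x_k)\|^2$ to derive $F(y_{k+1}) - F^* \le \frac{1-\mu s(2-sL)}{2\mu}\|G_s(x_k)\|^2$. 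Combining these and simplifying (the $2\mu s^2$ contributions cancel inside the bracket), one obtains
\begin{equation*}
E_{k+1} \le \|\phi_k\|^2 + \frac{s(1+\mu L s^2)}{2\mu}\|G_s(x_k)\|^2.
\end{equation*}

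Requiring $\rho E_{k+1} \le \frac{\mu s}{2}\|\phi_k\|^2 + \frac{s^2(1-sL)}{2}\|G_s(x_k)\|^2$ and matching coefficients forces $\rho \le \frac{\mu s}{2}$ and $\rho \le \frac{\mu s(1-sL)}{1+\mu L s^2}$; taking the minimum recovers exactly the stated $\rho$. Iterating $(1+\rho)E_{k+1} \le E_k$ gives $E_k \le E_1/(1+\rho)^{k-1}$, and the initial condition $x_1 = y_1$ yields $E_1 = s(F(x_1)-F^*)$; the claim follows from $F(y_k)-F^* \le E_k/s$. The main obstacle is the correct calibration of Young's inequality when bounding $\|\phi_{k+1}\|^2$: using parameter $1$ is precisely what makes the two contraction constants combine into the stated expression $\rho = \mu\min\{s/2,\,s(1-sL)/(1+\mu L s^2)\}$, mirroring the role played by the choice $m=L$ in the proof of Theorem \ref{T:grad_SC}.
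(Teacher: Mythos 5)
Your proposal is correct and follows essentially the same route as the paper: the same energy $E_k$, the same two applications of Lemma \ref{Lem: prox_bound_SC} (with $y=y_k$ for the decrease estimate and $y=x^*$ for the upper bound on $E_{k+1}$), and the same Young-type bound on $\|\phi_{k+1}\|^2$ — the only cosmetic difference being that you fix the Young parameter at $1$ from the outset, whereas the paper carries a general $m>0$ and specializes to $m=1$ at the end. All your intermediate constants check out against the paper's \eqref{E:SC_prox_aux1} and \eqref{E:SC_prox_aux2}.
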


\begin{proof}
The proof is similar to that of Theorem \ref{T:grad_SC}. On the one hand, we use \eqref{E: iterates_prox} to show that
\begin{align}\label{E: phi_k_bound_prox}
\frac{1}{2}\| x_{k+1} - y_{k+1} \|^2
&\le \left( \frac{k}{k+\alpha} \right)^2 \left[ \frac{1 + m}{2} \| x_k - y_k \|^2
+ \frac{s^2}{2}\left( 1 + \frac{1}{m} \right) \| G_s(x_k) \|^2 \right] \nonumber \\
&\le \frac{1 + m}{2} \| x_k - y_k \|^2
+ \frac{s^2}{2}\left( 1 + \frac{1}{m} \right) \| G_s(x_k) \|^2.
\end{align} 
On the other hand, setting $x=x_k$ and $y=x^*$ in Lemma \ref{Lem: prox_bound_SC} gives
\begin{align*} %\label{E: F_diff_bound_prox} 
F(y_{k+1}) - F^* 
&\le \langle G_s(x_k), x_k - x^* \rangle - \frac{s(2-sL)}{2} \| G_s(x_k) \|^2 - \frac{\mu}{2}\| x_k - x^* \|^2 \\    
& \le \frac{1}{2\mu}\| G_s(x_k) \|^2+\frac{\mu}{2}\| x_k - x^* \|^2 - \frac{s(2-sL)}{2} \| G_s(x_k) \|^2 - \frac{\mu}{2}\| x_k - x^* \|^2 \\    
& = \left[ \frac{1}{2\mu} - \frac{s(2-sL)}{2}\right] \| G_s(x_k) \|^2.
\end{align*}
Combining this with \eqref{E: phi_k_bound_prox}, we obtain
\begin{equation} \label{E:SC_prox_aux1}
E_{k+1}\le \frac{1 + m}{2} \| x_k - y_k \|^2
+\frac{s}{2}\left[ \frac{1}{\mu} +s\left(sL - 1 +\frac{1}{m}\right) \right] \| G_s(x_k) \|^2.
\end{equation}
Setting $x=x_k$ and $y=y_k$ in Lemma \ref{Lem: prox_bound_SC} gives
$$F(y_{k+1}) - F(y_k) \le \langle G_s(x_k), x_k - y_k \rangle - \frac{s(1-sL)}{2} \| G_s(x_k) \|^2-\frac{\mu}{2}\|x_k-y_k\|^2.$$
Combining this with Proposition \ref{P: E_k_bound_prox},
we get 
\begin{equation} \label{E:SC_prox_aux2}
E_{k+1} - E_k
\le  - \frac{\mu s}{2}\|x_k-y_k\|^2   - \frac{s^2(1-sL)}{2} \| G_s(x_k) \|^2.
\end{equation}
By comparing \eqref{E:SC_prox_aux1} and \eqref{E:SC_prox_aux2}, we deduce that
$$(1+\rho)E_{k+1} \le E_k,$$
where
$$\rho=\max_{m>0}\min\left\{\frac{\mu s}{1+m}, \frac{\mu s(1-sL)}{ 1 +\mu s\left( sL - 1 + \frac{1}{m} \right) }\right\},$$
and the lower bound is obtained by choosing $m=1$.
\end{proof}

\begin{remark}
For any $\alpha>0$, the step size $s=\frac{1}{2L}$ gives $F(y_k)-F^* \le \mathcal{O}\left( \frac{1}{( 1 + \rho)^k}  \right)$, with $\rho = \frac{\mu}{4L+\mu}$. A faster convergence rate of approximately $\mathcal{O}\left( \frac{1}{k^2}\left( 1 - \frac{\mu}{4L} \right)^k \right)$ was obtained in \cite{Shi_2024,Bao_2023} for $\alpha\ge 3$.   
\end{remark}

%%%%%%%%%%%%%%%%%%%%%%%%%%%%%%%%%%%%%%%%
%%%%%%%%%%%%%%%%%%%%%%%%%%%%%%%%%%%%%%%%
\bibliographystyle{siamplain}
\bibliography{myrefs}

\end{document}